\newcommand{\IR}{\mathbb{R}}
\newcommand{\IN}{\mathbb{N}}
\newcommand{\U}{\mathcal U}
\newcommand{\e}{\varepsilon}
\newcommand{\diam}{\operatorname{diam}}
\newcommand{\supp}{\operatorname{supp}}
\newcommand{\pr}{\mathrm{pr}}
\newcommand{\limn}{\lim_{n}}
\newcommand{\HM}{\mathsf{HM}}
\newtheorem{theorem}{Theorem}
\newtheorem{proposition}{Proposition}
\newtheorem{lemma}{Lemma}
\theoremstyle{definition}
\newtheorem{question}{Question}
\newtheorem{problem}{Problem}
\title{On linear operators extending [pseudo]metrics}
\author{Taras Banakh and Czes\l aw Bessaga}
\subjclass{54C20, 54C35, 54E20, 54E35}
\address{Department of Mathematics, Lviv University, Universytetska 1,
Lviv, 290602, Ukraine}
\email{tbanakh@yahoo.com}
\address{Instytut Matematyki, Uniwersytet Warszawski, ul. Banacha 2,
02-097 Warszawa, Poland}
\email{bessaga@impan.gov.pl}
\begin{document}
\begin{abstract}
For every closed subset $X$ of a
stratifiable [resp. metrizable] space $Y$
we construct a positive linear extension
operator $T:\IR^{X\times X}\to \IR^{Y\times Y}$
preserving constant functions, bounded functions,
continuous functions, pseudometrics, metrics, [resp. dominating metrics,
and admissible metrics].
This operator is continuous with respect to each of the three
topologies: point-wise
convergence,  uniform, and compact-open.

An equivariant analog of the above statement is proved as well.
\end{abstract}

\maketitle

The problem of existing a linear operator extending [pseudo]metrics from a
closed subset of a metric compactum $X$ over all of $X$ was posed by the
second author in
\cite{Be1} and partly solved in \cite{Be1}, \cite{Be2}. A complete
solution of this problem appeared in \cite{Ba1} and \cite{Pi} (see also
\cite{Ba0} and \cite{Ba2}).
M.~Zarichnyi \cite{Za} presented a very simple
construction of such extension operators.

In contrast to the mentioned results, the present paper, which is
a simplified and generalized version of the preprint \cite{Ba0},
allows to construct
linear operators extending metrics which are continuous with respect
to the point-wise convergence of functions.

For a space $Z$ we denote by $\IR^Z$ the space of all, not
necessarily continuous, real-valued functions on $Z$ with the
Tychonoff product topology (which corresponds to the point-wise
convergence of the functions).

Our first theorem is quite general and concerns stratifiable spaces, see
\cite{Bo} for their definitions and properties. Here we mention only that
each metrizable space is stratifiable, each stratifiable space is
perfectly paracompact, and every subspace of a stratifiable space is
stratifiable too.

\begin{theorem}\label{T1} Suppose $Y$ is a stratifiable space and $X$ is a
closed subspace of $Y$ with $|X|\ge 2$.  There exists a positive linear
extension operator $T:\IR^{X\times X}\to \IR^{Y\times Y}$
preserving constant functions, bounded functions, continuous functions,
pseudometrics, and metrics.
This operator is continuous with respect to each of the three
topologies: point-wise convergence, uniform, and compact-open.
\end{theorem}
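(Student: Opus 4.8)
My plan is to reduce the entire statement to the construction of a single continuous \emph{probability kernel} and to read off all the listed properties from its shape. The properties demanded of $T$ essentially force it to have the form $Tf(y,y')=\int_{X\times X}f\,d\nu_{y,y'}$ for a family of probability measures $\nu_{y,y'}$ on $X\times X$ with $\nu_{x,x'}=\delta_{(x,x')}$ on $X\times X$: once $T$ is linear, positive, fixes constants and is continuous for pointwise convergence, each evaluation $f\mapsto Tf(y,y')$ is a positive, constant-normalised, continuous linear functional on $\IR^{X\times X}$, i.e.\ integration against a finitely supported probability measure. Conversely, any family $(y,y')\mapsto\nu_{y,y'}$ of finitely supported probability measures depending continuously on $(y,y')$, with $\nu_{x,x'}=\delta_{(x,x')}$, defines an operator that is automatically linear, positive, constant-preserving, bounded-preserving (from $|Tf|\le\|f\|_\infty$), continuous-function-preserving, and continuous for all three topologies --- pointwise convergence from the finite supports, the uniform topology from the norm bound, and the compact-open topology from local finiteness. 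Thus the only real content is the preservation of pseudometrics and of metrics, and the search reduces to producing one good kernel.

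To make $Td$ a pseudometric whenever $d$ is one, I would realise the kernel on a single probability space by \emph{coupling}. Concretely I look for a probability space $(\Omega,\mathbb{P})$ and $X$-valued random elements $\xi_y$, $y\in Y$, with $\xi_x\equiv x$ for every $x\in X$, and put $\nu_{y,y'}=\mathrm{law}(\xi_y,\xi_{y'})$, so that $Td(y,y')=\int_\Omega d(\xi_y,\xi_{y'})\,d\mathbb{P}$. Symmetry and $Td(y,y)=0$ are then immediate, and the triangle inequality follows because it holds $\mathbb{P}$-almost surely for $d$ itself; this pointwise-in-$\omega$ argument is the decisive advantage of coupling all the $\xi_y$ together. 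The bulk of the $\xi_y$ I would build from a continuous map $\mu\colon Y\to P(X)$ into the space $P(X)$ of finitely supported probability measures on $X$, with $\mu_x=\delta_x$, obtained from a locally finite partition of unity $\{\varphi_s\}$ on the open set $Y\setminus X$ that is \emph{canonical} with respect to $X$ in the Dugundji--Borges sense; this is precisely where stratifiability of $Y$ is used. Realising $\mu$ on $[0,1]$ by its quantile (comonotone) coupling produces random elements whose joint laws depend continuously on $(y,y')$, the only delicate point being continuity across $X\times X$, which is guaranteed by canonicity of the cover.

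The hard part is the preservation of \emph{metrics}. The comonotone kernel above gives $Td(y,y')=0$ whenever $\mu_y=\mu_{y'}$, and since $X$ may be small while $Y$ is large, $\mu$ cannot be injective; so this kernel alone need not separate points of $Y$. The remedy must inject genuine dependence on the pair $(y,y')$ into the coupling, while keeping $\nu_{y,y'}$ a probability measure that is diagonal when $y=y'$ and trivial over $X\times X$, and without spoiling positivity, linearity or continuity. Here I would use that every stratifiable space is submetrizable, applied to the (again stratifiable) closed quotient $Y/X$, to obtain a bounded continuous pseudometric $\rho$ on $Y$ vanishing exactly on $(X\times X)\cup\{(y,y):y\in Y\}$. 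Fixing two distinct points $a,b\in X$ (available since $|X|\ge2$) and a continuous scalar $p\colon Y\to[0,1]$ with $p^{-1}(0)=X$, I would enlarge $\Omega$ and add a small \emph{$\rho$-driven defection}: with probability about $p(y)$ the value $\xi_y$ is resampled within $\{a,b\}$ according to a rule that $\rho$-separates $y$ from $y'$, so that for $y\ne y'$ the law of $(\xi_y,\xi_{y'})$ acquires positive mass on a pair at positive $d$-distance. Taking a convex combination of this separating family with the comonotone family keeps $\nu_{y,y'}$ a probability measure (hence positivity, constants and boundedness survive), keeps it an extension (the defection dies over $X$) and continuous (it dies near $X$), and forces $Td(y,y')>0$ for every metric $d$ and all $y\ne y'$.

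With the kernel in hand the remaining verifications are soft: finite supports give continuity for pointwise convergence, the estimate $|Td|\le\|d\|_\infty$ gives continuity for the uniform topology and preservation of bounded functions, local finiteness gives continuity for the compact-open topology, and continuity of $(y,y')\mapsto\nu_{y,y'}$ --- the single subtle point, near $X\times X$ --- gives preservation of continuous functions. I expect the construction of the $\rho$-driven defection, reconciling point-separation with positivity, linearity and continuity at the same time, to be the genuine obstacle; the coupling viewpoint renders the pseudometric axioms and all the functional-analytic properties essentially automatic.
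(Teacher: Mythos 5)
Your coupling framework is exactly the paper's Hartman--Mycielski construction in probabilistic dress: an element of $\HM(X)$ is an $X$-valued step random variable on $[0,1)$, the comonotone realisation of the Dugundji--Borges partition of unity is the map $h:Y\to\HM(X)$ of Proposition~\ref{P2}, and your base operator $\int_\Omega d(\xi_y,\xi_{y'})\,d\mathbb{P}$ is literally the operator $S(p)(y,y')=\int_0^1p(h(y)(t),h(y')(t))\,dt$ of Section~5, which the paper records as having all the desired properties \emph{except} preservation of metrics. So everything you call ``soft'' is indeed handled the same way in the paper, and you have correctly isolated the one genuine difficulty. But that difficulty is exactly the part you do not solve, and the mechanism you sketch for it does not work as stated.

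Your ``$\rho$-driven defection'' asks for $\{a,b\}$-valued random variables $\eta_y$, depending on $y$ alone (a coupling cannot let $\xi_y$ see $y'$), such that $\mathbb{P}(\eta_y\ne\eta_{y'})>0$ for all $y\ne y'$ while $\mathbb{P}(\eta_y\ne\eta_{y'})\to0$ as $y'\to y$ (needed for continuity of the kernel and for $\nu_{y,y}$ to stay diagonal). Writing $A_y=\{\eta_y=a\}$, this is a continuous \emph{injective} map $y\mapsto A_y$ of $Y\setminus X$ into the measure algebra of a probability space; a single bounded continuous pseudometric $\rho$ gives no such thing (e.g.\ $\mathbb{P}(\eta_y\ne\eta_{y'})=|q(y)-q(y')|$ for any single scalar driver $q$, which collapses on level sets), and it is not clear such a family exists for an arbitrary stratifiable, or even metrizable, $Y$. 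The paper's resolution uses two ideas absent from your proposal. First, a countable \emph{multi-scale} series $T=\sum_{n}2^{-n}T_n$ built from locally finite covers $\U_n$ of mesh $<2^{-n}$: separation of a fixed pair $y\ne y'$ is only achieved at the scale $n$ with $2^{-n+1}<d(y,y')$, not by one kernel. Second, and crucially, at scale $n$ the random variable takes values not in $\{a,b\}\subset X$ but in the abstract discrete index set $\U_n$, and the function $p$ is first extended to $(X\sqcup\U_n)\times(X\sqcup\U_n)$ by an operator $E$ assigning the value $p(a,b)$ to \emph{every} pair of distinct indices; since points with $d(y,y')>2^{-n+1}$ meet disjoint sets of indices, every realisation contributes $p(a,b)>0$. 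Mapping indices into the two-point set $\{a,b\}\subset X$ instead, as you propose, reintroduces collisions ($a$ paired with $a$ contributes $0$) and is precisely what the detour through $X\sqcup\U_n$ is designed to avoid. Without this (or an equivalent) separating device, your operator preserves pseudometrics but not metrics, and the theorem is not proved.
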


Obviously the phrase ``$T$ preserves bounded functions, etc.''
means that $T$ carries bounded functions, etc., on $X\times X$
into bounded functions, etc., on $Y\times Y$.

For metrizable spaces we are able to prove much more. It will be
convenient to formulate the corresponding result in terms of uniform
spaces (see Chapter 8 of \cite{En} for the theory of uniform spaces). We
remark that each metric space is automaticly a uniform space. We call a
uniform space {\it metrizable} if its uniformity is generated by a metric.

\begin{theorem}\label{T2} Suppose $Y$ is a metrizable uniform space and $X$ is a
closed subspace of $Y$ with $|X|\ge 2$.  There exists a positive linear
extension operator $T:\IR^{X\times X}\to \IR^{Y\times Y}$
preserving constant functions, bounded functions, continuous functions,
pseudometrics, metrics, admissible metrics, dominating metrics, and
uniformly dominating metrics.
This operator is continuous with respect to each of the three
topologies: point-wise convergence, uniform, and compact-open.
Moreover, if the uniform space $Y$ is complete, then $T$ preserves
complete continuous uniformly dominating metrics. If $Y$ is totally
bounded and $\dim(Y\setminus X)<\infty$, then $T$ preserves
totally bounded pseudometrics.
\end{theorem}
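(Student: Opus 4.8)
The plan is to realize the extension operator in an explicit ``common randomness'' form for which all the uniform conclusions reduce to two metric estimates. First I would fix a metric $d$ generating the uniformity $\U_Y$, put $U=Y\setminus X$, and choose (as in the Dugundji scheme) a locally finite open cover $\{W_i\}_{i\in I}$ of $U$ with a subordinate partition of unity $\{\lambda_i\}_{i\in I}$ and representative points $a_i\in X$ obeying the control $\diam_d W_i+d(a_i,W_i)\le C\,d(W_i,X)$ for a fixed constant $C$. On a fixed standard probability space $(\Omega,P)$ I would build maps $f_y\colon\Omega\to X$ with $(f_y)_*P=\sum_i\lambda_i(y)\delta_{a_i}$ for $y\in U$ and $f_x\equiv x$ for $x\in X$, arranging (via cumulative breakpoints of the $\lambda_i$ under a fixed well-ordering of $I$) that $y\mapsto f_y(\omega)$ is continuous for $P$-almost every $\omega$. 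Define $T\rho(y_1,y_2)=\int_\Omega \rho\bigl(f_{y_1}(\omega),f_{y_2}(\omega)\bigr)\,dP(\omega)$. Because a single $\omega$ is shared by both arguments, $T\rho(y,y)=0$, while symmetry and the triangle inequality pass pointwise under the integral; thus $T$ is positive, linear, constant-preserving, and restricts to $\rho$ on $X\times X$, and the preservation of bounded, continuous, pseudometric and metric functions together with the three continuities follows as in Theorem \ref{T1} (by dominated convergence). Hence it remains only to verify the uniform-type conclusions.

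The core is a pair of estimates. \emph{Upper bound:} since every active value $f_y(\omega)$ lies $d$-within $C\,d(y,X)$ of $y$, whenever $y_1,y_2$ are $\U_Y$-close all active representatives are pairwise $\U_X$-close, so (uniform) continuity of $\rho$ makes $T\rho(y_1,y_2)$ small; this gives (uniform) continuity of $T\rho$, i.e.\ $\tau_{T\rho}\subseteq\tau_Y$, the half of admissibility that does not need domination. \emph{Lower bound (the crux):} I must show $\U_Y$-separated pairs are $T\rho$-separated, i.e.\ for each $r>0$ there is $\e>0$ with $d(y_1,y_2)\ge r\Rightarrow T\rho(y_1,y_2)\ge\e$. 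For pairs in $X$ this is exactly the (uniform) domination of $\rho$; for the remaining pairs I would use the control on representatives to replace each $y_i$ by a nearest point of $X$ up to an error $O(d(y_i,X))$ and then invoke (uniform) domination of $\rho$ on $X$ to convert $d$-separation into $\rho$-separation of a definite $P$-fraction of the values $f_{y_i}(\omega)$. Reading off the two estimates gives $\tau_{T\rho}\supseteq\tau_Y$ and $\U_{T\rho}\supseteq\U_Y$, so $T$ preserves admissible, dominating, and uniformly dominating metrics. The hard part is making the lower bound \emph{uniform} across the collar of $U$ accumulating on $X$, where $d(y,X)\to0$ and the representatives crowd together; this is exactly where the control $\diam_d W_i+d(a_i,W_i)\le C\,d(W_i,X)$ is indispensable, and I expect this to be the main obstacle.

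The completeness clause is then formal. If $Y$ is complete and $\rho$ is a complete, continuous, uniformly dominating metric on $X$, then by the previous step $T\rho$ is a continuous, uniformly dominating metric on $Y$; hence a $T\rho$-Cauchy sequence is $\U_Y$-Cauchy, so it converges in $Y$ to some $y_\infty$, and continuity of $T\rho$ gives $T\rho(y_n,y_\infty)\to0$. Thus $T\rho$ is complete, with no new construction required.

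For the totally bounded clause, suppose $Y$ is totally bounded and $\dim(Y\setminus X)=n<\infty$. Here I would choose the cover $\{W_i\}$ of $U$ to have multiplicity $\le n+1$, so that for every $y$ at most $n+1$ representatives are active and $f_y(\omega)$ ranges over at most $n+1$ points of $X$ lying $d$-within $C\,d(y,X)$ of $y$. Given $\e>0$, total boundedness of $\rho$ supplies a finite $\rho$-$\e$-net of $X$, total boundedness of $Y$ controls the finitely many relevant scales of $d(\cdot,X)$, and the bounded multiplicity keeps the cost $\int_\Omega\rho(f_{y_1},f_{y_2})\,dP$ expressible through distances among boundedly many net points; assembling these produces a finite $T\rho$-$\e$-net of $Y$, so $T\rho$ is totally bounded. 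This clause is the other genuinely technical point, being where covering dimension and total boundedness must be combined.
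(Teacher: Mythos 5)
There is a genuine gap, and it sits exactly where you flagged ``the crux.'' Your operator $T\rho(y_1,y_2)=\int_\Omega\rho(f_{y_1}(\omega),f_{y_2}(\omega))\,dP(\omega)$, with $f_y$ built from cumulative breakpoints of a Dugundji system, is precisely the operator $S(p)(y,y')=\int_0^1p(h(y)(t),h(y')(t))\,dt$ that the paper constructs in its Section 5 --- and proves there (Theorem~4) that it does \emph{not} preserve metrics, hence a fortiori not dominating or uniformly dominating ones. The failure is not a technical difficulty to be overcome by a cleverer choice of cover: if $y_1\ne y_2$ are two points of $Y\setminus X$ with the same partition-of-unity values $\lambda_i(y_1)=\lambda_i(y_2)$ for all $i$, then $f_{y_1}=f_{y_2}$ as maps $\Omega\to X$ and $T\rho(y_1,y_2)=\int_\Omega\rho(f_{y_1}(\omega),f_{y_1}(\omega))\,dP=0$ for \emph{every} pseudometric $\rho$. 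Such coincidences are unavoidable (take $X$ a two-point set, so all $f_y$ take values in $\{a,b\}$ and are determined by a single number in $[0,1]$, while $Y\setminus X$ can be large). More generally, for $y_1,y_2$ both at distance $\ge1$ from $X$ your control $\diam_d W_i+d(a_i,W_i)\le C\,d(W_i,X)$ permits the representatives of $y_1$ and of $y_2$ to wander over $d$-balls of radius $C$ and even to coincide, so no lower bound on $T\rho(y_1,y_2)$ in terms of $d(y_1,y_2)$ can be extracted. Your replacement-by-nearest-point argument only works in the collar where $d(y_i,X)$ is small; deep inside $Y\setminus X$ it gives nothing.

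What the paper adds, and what your proposal is missing, is a \emph{separating} mechanism independent of the averaging over representatives in $X$. The paper fixes two points $a,b\in X$, takes for each $n$ a locally finite open cover $\U_n$ of $Y$ of $d$-mesh $<2^{-n}$, encodes membership in $\U_n$ by a map $h_n:Y\to\HM(\U_n)$, splices $h_n(y)$ with $h(y)$ on $[0,1)$ in proportion $\min\{1,n\,d(y,X)\}$, and extends $p$ to $(X\sqcup\U)\times(X\sqcup\U)$ by an operator $E$ assigning the value $p(a,b)>0$ to distinct cover elements. The resulting $T_n$ detects when $y_1,y_2$ lie in disjoint elements of $\U_n$, so $T=\sum_n2^{-n}T_n$ gains the lower bound $T(p)(y_1,y_2)\ge 2^{-n}\cdot\mathrm{const}\cdot p(a,b)$ for $d$-separated pairs away from $X$, while your averaging term handles pairs near $X$ (this is the paper's Lemmas~5 and~6, and in Section~5 the cheaper fix $S_1(p)=S(p)+p(a,b)\,d^*$ with $d^*(y,y')=\min[d(y,y'),d(y,X)+d(y',X)]$, which however destroys preservation of constants). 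Your completeness clause is fine as stated and matches the paper's Lemma~7, and your totally bounded clause is in the right spirit (bounded multiplicity of the cover keeps the image in some $\HM_k$), but both rest on the domination step that your operator cannot deliver.
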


A metric $d$ on a topological [resp. uniform] space $Z$ is
called {\em dominating} [resp. {\em uniformly dominating\/}] if
the formal identity map from the metric space $(Z,d)$ to $Z$
is [uniformly] continuous.
A metric which is continuous and
dominating is said to be {\it admissible}.

The proofs of the two theorems exploit Hartman-Mycielski space
$\HM(X)$ of all $X$-valued step functions defined
on the interval $[0,1)$ (in a similar way as Zarichnyi \cite{Za}
applied the space of all $X$-valued measurable functions)
and also Pikhurko's \cite{Pi} idea
of constructing the required operator $T$
as sum of a series of operators ``separating'' points of $Y$.

Theorems 1 and 2 will be applied to construct linear operators extending
invariant metrics. For a topological space $X$ by $C(X\times X)$ we denote
the linear lattice of continuous functions on $X\times X$, equipped with the
compact-open topology. If a compact topological group $G$ acts on $X$, let
$$C_{inv}(X\times X)=\{f\in C(X\times X): f(gx)=f(gy)\mbox{ for all $g\in G$ and
$x,y\in X$}\}$$ denote the subspace of $C(X\times X)$ consisting of all
$G$-invariant functions.

\begin{theorem}\label{T3} Suppose a compact topological group $G$ acts on a
stratifiable space $Y$, and $X$ is a $G$-invariant subspace of $Y$
with $|X| \ge 2$. There exists a positive linear
continuous (in the compact-open topology) extension operator
$T:C_{inv}(X\times X)\to C_{inv}(Y\times Y)$
preserving constant functions, bounded invariant functions, invariant
pseudometrics, invariant metrics.
If the space $Y$ is metrizable, then additionally $T$ preserves
admissible metrics.
If the group $G$ is finite, then the operator $T$ is continuous with
respect to the point-wise convergence of functions.
\end{theorem}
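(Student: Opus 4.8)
The plan is to obtain $T$ by \emph{averaging} the operator supplied by Theorems~\ref{T1} and \ref{T2} over the group $G$ with respect to its normalized Haar measure. Let $T_0\colon\IR^{X\times X}\to\IR^{Y\times Y}$ be the extension operator given by Theorem~\ref{T1} when $Y$ is stratifiable (and by Theorem~\ref{T2} when $Y$ is metrizable, so as to have admissibility at our disposal). Since $G$ is compact it carries a normalized Haar measure $\mu$, and for $f\in C_{inv}(X\times X)$ and $(y_1,y_2)\in Y\times Y$ I would set
\[
 Tf(y_1,y_2)=\int_G (T_0f)(gy_1,gy_2)\,d\mu(g).
\]
For continuous $f$ the integrand is a continuous function of $g$ on the compact group $G$ (the action $G\times Y\to Y$ being continuous), so the integral is well defined.

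Next I would check the algebraic and order properties, all of which reduce to the corresponding properties of $T_0$ together with elementary facts about the integral. Linearity and positivity are immediate. That $T$ is an extension operator uses that $X$ is $G$-invariant and $f$ is $G$-invariant: for $(x_1,x_2)\in X\times X$ one has $gx_1,gx_2\in X$, hence $(T_0f)(gx_1,gx_2)=f(gx_1,gx_2)=f(x_1,x_2)$, and integrating this constant over $\mu$ gives $Tf(x_1,x_2)=f(x_1,x_2)$. Invariance of $Tf$ is the translation invariance of $\mu$ (substitute $g\mapsto gh$). Preservation of constant and bounded functions is clear; the pseudometric axioms (symmetry, the triangle inequality, vanishing on the diagonal) pass under the integral because $T_0f$ satisfies them pointwise for every $g$, and if $d$ is a metric then $y_1\neq y_2$ forces $gy_1\neq gy_2$ for every $g$, so the strictly positive integrand yields $Td(y_1,y_2)>0$. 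Continuity of $Tf$ for continuous $f$ follows from the joint continuity of the integrand $(g,y_1,y_2)\mapsto(T_0f)(gy_1,gy_2)$ together with the compactness of $G$ (integration of a jointly continuous function over a compact parameter space preserves continuity). Continuity of $T$ in the compact-open topology likewise uses compactness of $G$: for compact $K\subseteq Y\times Y$ the set $G\cdot K$ is compact, $T_0$ is compact-open continuous, and $\sup_K|Tf_n-Tf|\le\sup_{G\cdot K}|T_0f_n-T_0f|$. When $G$ is finite the integral is a finite average, so the point-wise continuity of $T_0$ from Theorems~\ref{T1}--\ref{T2} is inherited by $T$; for infinite $G$ no such inheritance can be expected, which is why the point-wise statement is restricted to finite groups.

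The one genuinely delicate point is preservation of \emph{admissibility} when $Y$ is metrizable, i.e.\ showing that the averaged metric $Td$ is dominating whenever $d$ is (its continuity being already established). Here I cannot argue pointwise, since an integral can tend to $0$ without its integrand doing so, and the standard trick of replacing the integral by a supremum is unavailable because it would destroy linearity. Instead I would exploit metrizability. Suppose $Td(y_n,y_0)\to0$. Then the continuous functions $g\mapsto(T_0d)(gy_n,gy_0)$ tend to $0$ in $L^1(\mu)$, so any subsequence has a further subsequence along which they converge to $0$ for $\mu$-almost every $g$; fixing one such $g$ (the set of good $g$ has full, hence positive, measure) and using that $T_0d$, being admissible, induces the topology of $Y$, we get $gy_{n_{k_j}}\to gy_0$, and applying the homeomorphism $y\mapsto g^{-1}y$ yields $y_{n_{k_j}}\to y_0$. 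Thus every subsequence of $(y_n)$ has a further subsequence converging to $y_0$, and since $Y$ is metrizable this forces $y_n\to y_0$. Hence $Td$ is dominating, and being also continuous it is admissible. This subsequence-and-almost-everywhere argument, resting on the sequentiality of metrizable spaces, is the crux of the proof.
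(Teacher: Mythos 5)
Your proposal is correct and follows essentially the same route as the paper: the paper defines the averaging operator $A f(y,y')=\int_G f(gy,gy')\,d\mu$ (Proposition~\ref{P3}) and sets $I=A\circ T|C_{inv}(X\times X)$ with $T$ from Theorem~\ref{T1} (or Theorem~\ref{T2} in the metrizable case), proving preservation of admissibility by exactly your $L^1$-convergence and almost-everywhere-subsequence argument. The only cosmetic difference is in the metric-preservation step, where the paper bounds the integrand below on a neighborhood of the identity while you invoke positivity of the integral of a strictly positive integrand; both are valid.
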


The last theorem is an improvement obtained by the second author
of a former result of \cite{Ba0}
thanks to a discussion with C.~Atkin.
Another contribution of the second author is Section 5 containing
a relatively simple
construction of extension operators $S$, $S_1$, $S_2$ having almost all
properties of the operator $T$ from Theorems 1 and 2
(except that $S$ does not
preserve metrics, $S_1$ fails to preserve constants, and $S_2$ is not
positive).

\section{Hartman-Mycielski Construction}

This construction appeared in \cite{HM} in connection with some problems of
topological algebra, see also \cite{BM}.
For an $n\in\IN$ and a topological space $X$ let $\HM_n(X)$ be the set of
all functions $f:[0,1)\to X$ for which there exists a sequence
$0=a_0<a_1<\dots<a_n=1$ such that $f$ is constant on each interval
$[a_{i-1},a_i)$, $1\le i\le n$. Let $\HM(X)=\bigcup_{n\in\IN}\HM_n(X)$.

A neighborhood sub-base of the topology of $\HM(X)$ at an $f\in \HM(X)$
consists of sets $N(a,b,V,\e)$, where
\begin{itemize}
\item[1)] $0\le a<b\le 1$, $f$ is constant on $[a,b)$, $V$ is a
neighborhood of $f(a)$ in $X$, and $\e>0$;
\item[2)] $g\in N(a,b,V,\e)$ means that $|\{t\in[a,b):g(t)\notin
V\}|<\e$, where $|\cdot|$ denotes the Lebesgue measure.
\end{itemize}

As noted in \cite[Proposition 2]{BM} for every subspace $A$ of $X$,
the space $\HM(A)$ can be considered as a subspace of $\HM(X)$.
Also, the space $X$ can be identified with the subspace $\HM_1(X)$ of $\HM(X)$.

For an element $f\in \HM(X)$ let $\supp(f)$ denote the smallest subset
$A\subset X$ such that $f\in \HM(A)\subset \HM(X)$. Evidently that
$\supp(f)=f([0,1))$.

Recall that for a space $Z$ the space $\IR^Z$ is endowed with the
Tychonoff product topology
(which corresponds to the point-wise convergence on $\IR^Z$ considered as
a function space).

\begin{proposition}\label{P1} The formula
$$
hm(d)(f,g)=\int_0^1d(f(t),g(t))dt
$$
defines a positive linear continuous extension operator
$$hm:\IR^{X\times X}\to\IR^{\HM(X)\times \HM(X)}$$
preserving
constant functions, bounded functions, and bounded continuous
functions, pseudometrics, metrics,
dominating metrics, and bounded admissible metrics.
Moreover, for any totally bounded pseudometric $d$ on $X$ the
pseudometric $hm(d)$ is totally bounded on each $\HM_n(X)$, $n\in\IN$.
\end{proposition}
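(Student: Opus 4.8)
The plan is to reduce every assertion to the elementary observation that for step functions $f,g\in\HM(X)$ one can choose a common partition $0=a_0<a_1<\dots<a_m=1$ on which both are constant, so that
$$hm(d)(f,g)=\sum_{i=1}^m d\big(f(a_{i-1}),g(a_{i-1})\big)\,(a_i-a_{i-1}),$$
a combination of values of $d$ with nonnegative weights summing to $1$. Linearity and positivity are then immediate, and continuity of the operator $hm$ (in the point-wise topology) follows because for fixed $(f,g)$ the map $d\mapsto hm(d)(f,g)$ is a finite linear combination of coordinate evaluations. Taking $f\equiv x$, $g\equiv y$ in $\HM_1(X)\cong X$ gives $hm(d)(x,y)=d(x,y)$, the extension property. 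Preservation of constants and of bounded functions is read off the same formula (the weights sum to $1$), and the four pseudometric axioms for $hm(d)$ follow by applying the corresponding axiom of $d$ termwise or under the integral sign; for a metric $d$, vanishing of the finite sum $\sum_i d(f(a_{i-1}),g(a_{i-1}))(a_i-a_{i-1})$ forces each $d$-value to vanish, hence $f=g$, so $hm(d)$ is a metric.

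Next I would treat continuity of $hm(d)$ when $d$ is bounded and continuous, say $|d|\le M$. Fixing $(f_0,g_0)$, I take a common partition on whose pieces $f_0\equiv x_i$, $g_0\equiv y_i$, use continuity of $d$ at $(x_i,y_i)$ to choose neighbourhoods $V_i\ni x_i$, $W_i\ni y_i$ with $|d-d(x_i,y_i)|<\eta/2$ there, and form the $\HM$-neighbourhood of $(f_0,g_0)$ given by $f\in N(a_{i-1},a_i,V_i,\delta)$ and $g\in N(a_{i-1},a_i,W_i,\delta)$ for all $i$. On the ``bad set'' where $f$ or $g$ escapes $V_i\times W_i$ (total measure $<2m\delta$) I bound the integrand by $2M$, while on its complement it is $<\eta/2$; summing yields $|hm(d)(f,g)-hm(d)(f_0,g_0)|\le \eta/2+4Mm\delta$, which is $<\eta$ for $\delta$ small. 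This bad-set estimate is the main analytic point, and it is exactly where boundedness of $d$ is indispensable, explaining why only \emph{bounded} continuous functions are preserved.

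For the dominating case I fix a subbasic neighbourhood $N(a,b,V,\e)$ of $f_0$; since $d$ dominates the topology of $X$, the open set $V$ contains a $d$-ball $B_d(f_0(a),\rho)$. If $hm(d)(f,f_0)<\rho\e$, then integrating over $[a,b)$, where $f_0\equiv f_0(a)$, the set $\{t\in[a,b):f(t)\notin V\}$ has measure at most $hm(d)(f,f_0)/\rho<\e$ because $f(t)\notin V$ forces $d(f(t),f_0(t))\ge\rho$; hence $f\in N(a,b,V,\e)$, proving the formal identity $(\HM(X),hm(d))\to\HM(X)$ continuous, i.e. $hm(d)$ is dominating. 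Combining this with the previous paragraph and the metric axioms gives that $hm(d)$ is a bounded admissible metric whenever $d$ is.

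Finally, for total boundedness on $\HM_n(X)$ I would use a two-parameter approximation, which I expect to be the most technical step. Given $\e>0$, a finite $\delta$-net $F\subset X$ for $d$ exists and forces $D:=\diam_d X<\infty$; fix also a uniform grid of $[0,1)$ of mesh $\gamma$ with $1/\gamma\in\IN$. For $f\in\HM_n(X)$ I first replace each value of $f$ by a nearby point of $F$, changing $hm(d)$ by less than $\delta$, and then snap the result to the grid, letting $g$ be constant on each grid cell with the value at its left endpoint, so $g$ lies in the finite set of grid step functions with values in $F$. Only the at most $n-1$ grid cells meeting an interior jump of $f$ can contribute, each at most $\gamma D$, so $hm(d)(f,g)\le\delta+(n-1)\gamma D$; choosing $\delta<\e/2$ and $\gamma<\e/(2(n-1)D)$ makes this $<\e$, exhibiting a finite $\e$-net and proving total boundedness (the case $n=1$ being just $(X,d)$). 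The bookkeeping separating the ``value error'' from the ``partition error'' is the crux here.
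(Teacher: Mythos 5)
Your proposal is correct, and it diverges from the paper's proof in two places worth noting. For the basic properties (linearity, positivity, extension, constants, bounded functions, pseudometrics, metrics, pointwise continuity) and for the preservation of bounded continuous functions, you argue exactly as the paper does: a common refining partition reduces $hm(d)(f,g)$ to a convex combination of values of $d$, and the continuity proof is the same bad-set estimate with neighbourhoods $N(a_{i-1},a_i,V_i,\delta)$. The first real difference is the dominating case: the paper simply cites Proposition 5 of \cite{BM}, whereas you give a direct, self-contained Chebyshev-type argument --- on a sub-basic neighbourhood $N(a,b,V,\e)$ the measure of $\{t\in[a,b):f(t)\notin V\}$ is controlled by $hm(d)(f,f_0)/\rho$ once $V$ contains a $d$-ball of radius $\rho$ --- which is a clean replacement for the external reference and correctly exploits that the $N(a,b,V,\e)$ form only a sub-base. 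The second difference is total boundedness on $\HM_n(X)$: the paper quotients by $d(x,y)=0$, completes to a compactum $\tilde X$, invokes compactness of $\HM_n(\tilde X)$ as a continuous image of $\triangle^{n-1}\times\tilde X^n$ and continuity of $hm(\tilde\rho)$, then pulls back; you instead build an explicit finite $\e$-net of grid step functions with values in a finite $\delta$-net of $X$, splitting the error into a value error ($\le\delta$) and a partition error ($\le(n-1)\gamma D$ from the at most $n-1$ grid cells meeting a jump). Your construction is more elementary and avoids the compactness machinery, at the cost of some bookkeeping; it is sound, with the degenerate cases $n=1$ and $\diam_d X=0$ trivial as you note, and with the harmless point that your net lives in $\HM_{1/\gamma}(X)$ rather than $\HM_n(X)$ (an external $\e/2$-net always yields an internal $\e$-net).
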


\begin{proof} It is an easy exercise to show that $hm$ is a positive
linear continuous extension operator preserving constant functions,
bounded functions,
and [pseudo]metrics. From Proposition 5 of \cite{BM} and its proof it
follows that $hm$ preserves dominating metrics and bounded admissible
metrics.

Let us show that $hm$ preserves bounded continuous functions. For
this fix a bounded continuous function $d:X\times X\to \IR$, $\e>0$ and
two elements $f,g\in \HM(X)$. Without loss of generality,
$|d(x,x')|\le 1$ for every $x,x'\in X$. Let $0=a_0<a_1<\dots<a_n=1$ be
a sequence such that both $f$ and $g$ are constant on each interval
$[a_{i-1},a_i)$, $1\le i\le n$. Using the continuity of $d$, for
every $i\in\{1,\dots,n\}$ pick neighborhoods $U_i,V_i\subset X$ of
$f(a_i)$, $g(a_i)$, respectively, such that for every $x\in U_i$,
$y\in V_i$ we have $|d(x,y)-d(f(a_i),g(a_i))|<\e/2$. Then
$U=\bigcap_{i=1}^nN(a_{i-1},a_i,U_i,\frac\e{8n})$ and
$V=\bigcap_{i=1}^nN(a_{i-1},a_i,V_i,\frac\e{8n})$ are neighborhoods
of $f$ and $g$, respectively, such that for every $f'\in U$, $g'\in
V$ we have $|hm(d)(f',g')-hm(d)(f,g)|<\e$. That means  the
function $hm(d):\HM(X)\times \HM(X)\to \IR$ is continuous.

Finally, we show that for every totally bounded pseudometric $d$
on $X$ the pseudometric $hm(d)$ is totally bounded on each $\HM_n(X)$.
Fix $n\in\IN$ and a totally bounded pseudometric  $d$ on $X$.
Consider the equivalence relation $\sim$ on $X$, where $x\sim y$ if
$d(x,y)=0$. Then the pseudometric $d$ induces a totally bounded
metric $\rho$ on the quotient space $X/$\hskip-2pt$\sim$. Let $(\tilde
X,\tilde\rho)$ denote the completion of $X/$\hskip-2pt$\sim$ by
the metric $\rho$
and let $p:X\to X/$\hskip-2pt$\sim\;\subset\tilde X$ be the quotient map. Clearly,
the space $\tilde X$ is compact. Then the space $\HM_n(\tilde X)$ is
compact as a continuous image of the product $\triangle^{n-1}\times
\tilde X^n$, where $\triangle^{n-1}=\{(a_0,\dots,a_n):0=a_0\le
a_1\le\dots\le a_n=1\}$ is an $(n-1)$-dimensional simplex, see
\cite[p.217]{BM}. The metric $hm(\tilde \rho)$, being
continuous, is totally bounded on $\HM_n(\tilde X)$. Since for every
$f,g\in \HM_n(X)$ \ $p\circ f,p\circ g\in \HM_n(\tilde X)$ and
$hm(d)(f,g)=hm(\tilde \rho)(p\circ f,p\circ g)$, we get $hm(d)$ is a
totally bounded pseudometric on $\HM_n(X)$.\qed
\end{proof}

For a space $X$ by $\exp_\omega X$ we denote the set of all finite
subsets of $X$. A map $u:Y\to \exp_\omega X$ is called {\it
upper-semicontinuous}
provided for every open set $U\subset X$ the set $\{y\in Y\mid u(y)\subset
U\}$ is open in $Y$.

Next, we prove that the spaces $\HM(X)$ over stratifiable spaces
have an important extension property. Below for a metric $d$ on a space
$X$ the open $d$-ball $\{x'\in X:d(x',x)<\e\}$ of radius $\e$
around a point $x\in X$ is denoted by $O_d(x,\e)$.

\begin{proposition}\label{P2} For every closed subset $X$ of a stratifiable space
$Y$ there exist
\begin{enumerate}
\item[1)] an upper semi-continuous map $u:Y\to \exp_\omega X$
such that $u(x)=\{x\}$ and
\item[2)] a continuous map $h:Y\to \HM(X)$ extending the
identity embedding $X\hookrightarrow \HM(X)$
such that $\supp(h(y))\subset u(y)$ for every $y\in Y$.
\end{enumerate}
Moreover, if $\dim(Y\setminus X)< n$, then $h(Y)\subset \HM_n(X)$.
If $d$ is an admissible metric for $Y$, then the map $u$ can be chosen so
that $u(y)\subset O_d(y,2d(y,X))$ for every $y\in Y$.
\end{proposition}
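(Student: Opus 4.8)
The plan is to follow Dugundji's extension scheme, replacing the convex combinations of his argument (which require a linear target) by step functions in $\HM(X)$, whose formation is purely combinatorial and hence available for an arbitrary closed $X$. Everything reduces to a single cover lemma, which is where stratifiability enters and which I expect to be the main obstacle: \emph{there exist a locally finite (in $U:=Y\setminus X$) open cover $\{W_j\}_{j\in J}$ of the open set $U$ and points $x_j\in X$ such that for every $x\in X$ and every neighborhood $O$ of $x$ in $Y$ there is a neighborhood $O'\subseteq O$ of $x$ for which $W_j\cap O'\ne\emptyset$ implies $x_j\in O$.} To build such a cover one stratifies $U$ by open sets $U_1\subseteq\overline{U_1}\subseteq U_2\subseteq\cdots$ with $\overline{U_k}\subseteq U$ and $\bigcup_k U_k=U$, refines each ``annulus'' $U_k\setminus\overline{U_{k-1}}$ to a locally finite family, and assigns to a member lying in the $k$-th annulus a point $x_j\in X$ taken from a neighborhood of $X$ dictated by the stratification; proximity of $y$ to $X$ then forces the relevant level $k$ to grow, dragging the associated points $x_j$ toward the point of $X$ being approached. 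The delicate step is to make the one assignment $j\mapsto x_j$ meet the control condition simultaneously for all $x\in X$, and this is exactly the service performed by the monotone, cushioned structure of a stratification.

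Granting the cover lemma, fix a well-ordering of $J$ and a locally finite partition of unity $\{\phi_j\}_{j\in J}$ on $U$ subordinate to $\{W_j\}$ (it exists since $U$, being open in a paracompact space, is paracompact), extended by $0$ over $X$. Set $u(x)=\{x\}$ for $x\in X$ and $u(y)=\{x_j: y\in\overline{W_j}\}$ for $y\in U$; these are finite sets because $\{\overline{W_j}\}$ is locally finite, and $u$ is upper semicontinuous — at points of $U$ because $\{y:u(y)\subseteq O\}=\bigcap_{x_j\notin O}(Y\setminus\overline{W_j})$ is locally a finite intersection of open sets, and at a point $x\in X$ directly from the control condition. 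Define $h:Y\to\HM(X)$ by $h(x)=x\in\HM_1(X)$ for $x\in X$ and, for $y\in U$, by letting $h(y)$ be the step function that equals $x_j$ on a subinterval of $[0,1)$ of length $\phi_j(y)$, the subintervals placed end to end in the chosen order of $J$ (they exhaust $[0,1)$ since $\sum_j\phi_j(y)=1$). Then $\supp(h(y))=\{x_j:\phi_j(y)>0\}\subseteq u(y)$ and $h$ extends the embedding $X\hookrightarrow\HM(X)$. Continuity of $h$ on $U$ is routine from the continuity and local finiteness of the $\phi_j$ and the measure-based topology of $\HM(X)$; continuity at $x\in X$ holds because, given a basic neighborhood $N(0,1,V,\e)$ of the constant function $x$ and a neighborhood $O$ of $x$ in $Y$ with $O\cap X\subseteq V$, the control condition yields a neighborhood $O'$ of $x$ with $x_j\in V$ whenever $W_j\cap O'\ne\emptyset$; then every value of $h(y)$ for $y\in O'$ lies in $V$, so $|\{t:h(y)(t)\notin V\}|=0<\e$.

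The two refinements are obtained by tuning this scheme. If $\dim(Y\setminus X)<n$, then every open cover of the paracompact space $U$ has a locally finite open refinement of order $\le n$; applying this to $\{W_j\}$ and transporting to each refined set the point $x_j$ of a member containing it preserves the control condition, so we may assume $\{W_j\}$ has order $\le n$. Then at each $y$ at most $n$ of the $\phi_j$ are positive, $h(y)$ has at most $n$ steps, and $h(Y)\subseteq\HM_n(X)$. Finally, when $d$ is an admissible metric on $Y$ I would replace the cover lemma by the genuine metric Dugundji construction: cover $U$ by the balls $O_d(y,\eta\,d(y,X))$ with $\eta=\tfrac15$, pass to a locally finite refinement $\{W_j\}$ with $W_j\subseteq O_d(y_j,\eta\,d(y_j,X))$, and choose $x_j\in X$ with $d(y_j,x_j)<(1+\eta)d(y_j,X)$. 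For $y\in\overline{W_j}$ the triangle inequality gives $d(y,x_j)\le(1+2\eta)d(y_j,X)\le\tfrac{1+2\eta}{1-\eta}\,d(y,X)<2\,d(y,X)$, so $u(y)\subseteq O_d(y,2\,d(y,X))$; and since $d(x_j,x)<2\,d(y,X)+d(y,x)\le3\,d(y,x)\to0$ as $y\to x\in X$, the control condition, and with it the continuity of $h$, holds automatically in the metrizable case.
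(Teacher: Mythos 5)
Your proposal is correct and follows essentially the same route as the paper: the ``cover lemma'' you isolate (a locally finite open cover of $Y\setminus X$ with assigned points $x_j\in X$ satisfying the control condition at each $x\in X$) is exactly what the paper extracts from the proof of Borges's Theorem 4.3 on stratifiable spaces, and the remainder --- partition of unity, concatenated step functions, upper semicontinuity of $u$, continuity of $h$ at points of $X$ via the control condition, the order-$\le n$ refinement, and the Dugundji ball estimate $u(y)\subset O_d(y,2d(y,X))$ in the metric case --- matches the paper's argument step for step. The only place you are sketchy, the proof of the cover lemma itself, is precisely the place where the paper defers to the citation of Borges rather than proving it.
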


\begin{proof} Suppose $X$ is a closed subset of a stratifiable space $Y$.
By the proof of Theorem 4.3 of \cite{Bo}, there exists
a locally finite open cover $\U$ of $Y\setminus X$ and a
map $\alpha:\U\to X$ such that the map $u:Y\to \exp_\omega(X)$ defined by
$u(y)=\{y\}$ for $y\in X$
and
$u(y)=\{\alpha(U)\mid y\in \operatorname{cl}(U),\; U\in\U\}$ for $y\in
Y\setminus X$
is upper semi-continuous.
Let $\le$ be any linear ordering of the set $\U$ and let
$\{\lambda_U:Y\setminus X\to
[0,1]\}_{U\in\U}$ be a partition of unity subordinate to the cover $\U$.
For a $y\in Y\setminus X$ define a function $h(y)\in \HM(X)$ letting
$$
h(y)(t)=\alpha(U), \text{ \ if \ } \sum_{V<U}\lambda_V(y)\le t<
\sum_{V\le U}\lambda_V(y).
$$
Because only finitely many of $\lambda_V(y)$'s are distinct from
zero, the function $h(y)$ is well-defined.

For $y\in X$ let $h(y)=y\in X\subset \HM(X)$.

We claim that the so-defined map $h:Y\to \HM(X)$ is continuous and
satisfies the requirements of Proposition 2. The inclusion
$supp(h(y))\subset u(y)$, $y\in Y$, follows from the definitions of
$h(y)$ and $u(y)$.

The continuity of $h$ on the set $Y\setminus X$ easily follows from the
local finiteness of the cover $\U$.
Let us
verify the continuity of $h$ at a point $x\in X$. Fix any
neighborhood $U$ of $h(x)=x$ in $\HM(X)$. According to the
definition of the topology of $\HM(X)$, there exists a
neighborhood $V$ of $x$ in $X$ such that $\HM(V)\subset U$.
Since the map $u:Y\to \exp_\omega X$ is upper-semicontinuous and $u(x)=\{x\}$,
there is a neighborhood $W$ of $x$ in $Y$ such that $u(y)\subset V$
for every $y\in W$.
Then for such $y$ we have $h(y)\in \HM(u(y))\subset \HM(V)\subset U$,
i.e. $h$ is continuous at the point $x$.

If $\dim(Y\setminus X)< n$ then the
cover $\U$ can be chosen to be of order $\le n$.
In this case, according to the construction, we get
 $h(Y)\subset \HM_n(X)$.

If $Y$ is a metrizable space with an admissible metric $d$, then using
the classical technique of Dugundji \cite{Du} we may construct the map $u$
so that $u(y)\subset O_d(y,2d(y,X))$ for every $y\in Y$.
\end{proof}

\begin{question} Is $\HM(X)$ an absolute extensor for stratifiable spaces?
The answer is ``yes'' for separable metrizable $X$. (This
can be shown applying the arguments of \cite[Ch.VI, \S7]{BP}).
\end{question}

\section{Construction of an extension operator $T$}

Suppose $X$ is a closed subset of a stratifiable space $Y$ and $a,b$ be
two distinct points of $X$.
An operator $T$ satisfying the requirements of Theorems 1 and 2
will be constructed as the sum of a series
$\sum_{n=1}^\infty 2^{-n}T_n$, where the collection of extension operators
$\{T_n:\IR^{X\times X}\to \IR^{Y\times Y}\}_{n=1}^\infty$  ``separates"
points of  $Y$.

It is  known that every stratifiable space admits a bijective
continuous map onto a metrizable space (combine [Bo, Lemma
8.2] with [Bo, property (A) on p.2]).
Therefore, there is a continuous metric $d\le 1$ on $Y$.
Moreover, applying Theorem 5.2 of
\cite{Bo}, we may adjust the metric $d$ so that $d(y,X)>0$ for every $y\in
Y\setminus X$, where, as usual, $d(y,X)=\inf\{d(y,x): x\in X\}$.
If $Y$ is a metrizable uniform space, then $d$ will be assumed
to generate the uniformity of $Y$.

Let $h:Y\to \HM(X)$ and $u:Y\to\exp_\omega(X)$ be the maps from Proposition 2
(in case $\dim Y\setminus X<\infty$ we assume that $h(Y)\subset \HM_k(X)$
for some $k\in\IN$).

For every $n\in\IN$ we shall define
an extension operator $T_n:\IR^{X\times X}\to\IR^{Y\times Y}$ as follows.
Fix $n\in\IN$.
Let $\U_n$ be a locally finite (resp. finite,
if the metric $d$ is totally bounded) open
cover  of the space $Y$ such that $\diam_d(U)< 2^{-n}$ for
every  $U\in \U_n$, and let $\{\lambda^n_U:Y\to [0,1]\}_{U\in\U_n}$ be a
partition of unity, subordinate to the cover $\U_n$.
Further we consider $\U_n$ as a discrete topological space. Let $\le $ be any
linear ordering on $\U_n$ and let $h_n:Y\to \HM(\U_n)$ be the map defined for
a $y\in Y$ by the formula
$$
h_n(y)(t)=U,\text{ \ if \ } \sum_{V<U}\lambda_V^n(y)\le t<\sum_{V\le U}\lambda^n_V(y).
$$
As in the proof of Proposition~\ref{P2}, it can be shown that the map $h_n$ is
continuous.

By $X\sqcup \U$  denote the disjoint union of the
spaces $X$ and $\U_n$, $n\in\IN$.
According to \cite[Proposition~2]{BM}, we may identify $\HM(X)$ and $\HM(\U_n)$
with subspaces of $\HM(X\sqcup \U)$.
Finally, define a map $f_n:Y\to \HM(X\sqcup\U)$
letting for a $y\in Y$
$$
f_n(y)(t)=\begin{cases} h_n(y)(t),&\mbox{if \;\; $0\le t<\min\{1,n\,d(y,X)\}$};\\
h(y)(t),&\mbox{if \;\; $\min\{1,n\,d(y,X)\}\le t<1$}.
\end{cases}
$$
It is easily seen that $f_n$ is a continuous map
extending the natural embedding
$X\subset \HM(X)\subset \HM(X\sqcup\U)$.

Let us consider the linear
operator $E:\IR^{X\times X}\to \IR^{(X\sqcup\U)\times (X\sqcup \U)}$ defined for
every $p\in \IR^{X\times X}$ by
$$
E(p)(x,y)=\begin{cases}
p(x,y),&\mbox{if $x,y\in X$;}\\
\frac12p(x,a)+\frac12p(x,b),&\mbox{if $x\in X,\; y\in\U$;}\\
\frac12p(a,y)+\frac12p(b,y),&\mbox{if $x\in \U,\; y\in X$;}\\
p(a,b),&\mbox{if $x,y\in \U$ and $x\ne y$}\\
0, & \mbox{if $x=y$};
\end{cases}
$$
(recall that $a,b$ are two fixed point in $X$).
One can easily verify that  $E$ is a positive linear
continuous extension operator
preserving constants, bounded, bounded continuous functions and
[pseudo]metrics.

The operator $T_n:\IR^{X\times X}\to\IR^{Y\times Y}$ is defined as the composition
$$
\IR^{X\times X}\overset{E}\to\longrightarrow\IR^{(X\sqcup\U)\times(X\sqcup\U)}
\overset{hm}\to\longrightarrow \IR^{\HM(X\sqcup\U)\times \HM(X\sqcup\U)}
\overset{(f_n\times f_n)^*}\to\longrightarrow\IR^{Y\times Y},
$$
where $(f_n\times f_n)^*(p)=p\circ (f_n\times f_n)$ for $p\in
\IR^{\HM(X\sqcup\U)\times \HM(X\sqcup\U)}$, equivalently, by the
explicit formula
$$
T_n(p)(y,y')= \int_0^1E(p)(f_n(y)(t),f_n(y')(t))\,dt \quad
\hbox{{\sl for}} \quad p\in \IR^{X\times X}, \quad y,y' \in Y.
$$

Remark that $T_n$ is a positive linear continuous extension operator
preserving constants, bounded, bounded continuous functions
and pseudometrics.

Finally, we define the required operator $T:\IR^{X\times X}\to \IR^{Y\times Y}$
by the formula
$$T=\sum_{n=1}^\infty \frac1{2^n}T_n.$$

We shall verify the properties of the operator $T$.
First, observe that the definition of $T$ is correct, i.e. for every
function $p:X\times X\to \IR$ and every $y,y'\in Y$ the series
$\sum_{n=1}^\infty 2^{-n}T_n(p)(y,y')$ is convergent. This is
trivial, when $y,y'\in X$ (all $T_n$'s are extension operators).
If $y\in X$ and $y'\notin X$ then for every
$n\in\IN$ with $d(y',X)\ge\frac1n$, by the construction of $T_n$, we have
$T_n(p)(y,y')=\frac12p(y,a)+\frac12 p(y,b)$. If $y,y'\notin X$ then,
for every $n\in\IN$ with $d(y,X),d(y',X)\ge\frac1n$, we have
$|T_n(p)(y,y')|\le |p(a,b)|$. These remarks imply that the series
$\sum_{n=1}^\infty 2^{-n}T_n(y,y')$  converges for every $y,y'\in Y$,
i.e. the definition of $T$ is correct.

Since $T_n$'s are positive linear extension operators preserving constants,
bounded functions, bounded continuous functions
functions and pseudometrics, so is the operator $T$.

\section{Proof of Theorem~\ref{T1}}

In an obvious way Theorem 1 follows from the above-mentioned properties of
the operator $T$ and the subsequent four lemmas.
The first of them can be easily derived from the construction of $T$.

\begin{lemma}\label{L1} Let $y,y'\in Y$ and $A=\supp(h(y))\cup\supp(h(y'))\cup\{a,b\}$.
If $p,p':X\times X\to\IR$ satisfy $p|A\times A\le p'|A\times A$, then
$T(p)(y,y')\le T(p')(y,y')$. Moreover, if $p|A\times A\equiv c$, then
$T(p)(y,y')=c$.
\end{lemma}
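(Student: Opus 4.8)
The plan is to reduce the whole statement to a single pointwise-in-$t$ observation about the integrand defining $T_n$, and then to integrate and sum. Fix $y,y'\in Y$ and put $A=\supp(h(y))\cup\supp(h(y'))\cup\{a,b\}$. The first step is to record that for every $n\in\IN$ and every $t\in[0,1)$ the value $f_n(y)(t)$ lies in $\supp(h(y))\cup\U_n$ and $f_n(y')(t)$ lies in $\supp(h(y'))\cup\U_n$; in particular, whenever either of them lies in $X$ it already lies in $A$. Inspecting the five clauses in the definition of $E$, one checks that the number
\[
E(p)\bigl(f_n(y)(t),f_n(y')(t)\bigr)
\]
is, for each fixed $t$ and $n$, either identically $0$ (this happens exactly when $f_n(y)(t)=f_n(y')(t)\in\U$) or else a convex combination — a combination with nonnegative coefficients summing to $1$ — of the quantities $p(s,s')$, $\tfrac12 p(s,a)+\tfrac12 p(s,b)$, $\tfrac12 p(a,s')+\tfrac12 p(b,s')$, and $p(a,b)$, where $s,s'\in A$. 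Hence the integrand depends on $p$ only through the restriction $p|A\times A$, and it does so in an order-preserving fashion.

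The monotonicity assertion then follows at once. If $p|A\times A\le p'|A\times A$, then each of the convex combinations above is order-preserving in its $p$-entries and the exceptional value $0$ is common to both sides, so $E(p)(f_n(y)(t),f_n(y')(t))\le E(p')(f_n(y)(t),f_n(y')(t))$ for every $t\in[0,1)$ and every $n$. Integrating over $[0,1)$ gives $T_n(p)(y,y')\le T_n(p')(y,y')$, and summing these inequalities against the positive weights $2^{-n}$ yields $T(p)(y,y')\le T(p')(y,y')$, as required.

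For the second assertion I would exploit the monotonicity in both directions. If $p|A\times A\equiv c$, then both $p|A\times A\le c$ and $c\le p|A\times A$ hold, where $c$ also denotes the constant function of value $c$; the monotonicity just proved gives $T(p)(y,y')=T(c)(y,y')$, so it remains only to evaluate $T$ on the genuine constant $c$. Here each \emph{nonzero} value of the integrand, being a convex combination of copies of $c$, equals $c$, so for every $n$ and $t$ we have $E(c)(f_n(y)(t),f_n(y')(t))=c$ off the set $Z_n=\{t:f_n(y)(t)=f_n(y')(t)\in\U\}$ and $=0$ on it.

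The step I expect to be the main obstacle is precisely the treatment of the set $Z_n$: the normalization $E(p)(u,u)=0$ for $u\in\U$, which is exactly what forces $T(d)$ to vanish on the diagonal when $d$ is a pseudometric, is the only feature that could keep the integrand from being constantly equal to $c$. Thus the clean conclusion $T(c)(y,y')=c$ rests on controlling $\sum_n 2^{-n}|Z_n|$, and this is the point where the structure of the maps $f_n$ (and, for the diagonal values $y=y'$, the interpretation of constant–preservation compatible with the forced vanishing on $\U$) must be invoked carefully rather than quoted as routine.
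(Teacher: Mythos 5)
Your proof of the monotonicity assertion is correct, and it is precisely the ``easy derivation from the construction'' that the paper intends (the paper offers no written proof of this lemma beyond that remark): for each $n$ and $t$ the integrand $E(p)(f_n(y)(t),f_n(y')(t))$ is a nonnegative combination, with coefficients not depending on $p$, of values of $p$ on $A\times A$, except on the set $Z_n=\{t\in[0,1):f_n(y)(t)=f_n(y')(t)\in\U\}$, where it equals $0$ for every $p$; since $Z_n$ is the same for $p$ and $p'$, integrating and summing against the weights $2^{-n}$ gives $T(p)(y,y')\le T(p')(y,y')$.

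The ``Moreover'' clause, however, you leave unproved, and the obstacle you flag cannot be removed in the way you hope: the sets $Z_n$ genuinely have positive measure. For $y=y'\in Y\setminus X$ one has $Z_n=[0,\min\{1,n\,d(y,X)\})$, and for $y\ne y'$ both far from $X$ and sufficiently close to each other, continuity of the partition of unity $\{\lambda^n_U\}$ forces $h_n(y)$ and $h_n(y')$ to agree on a set of positive measure. Hence, with the diagonal clause of $E$ read literally as $E(p)(x,x)=0$ for $x\in\U$, a function with $p|A\times A\equiv c$ gives $T(p)(y,y')=c\,(1-\sum_n2^{-n}|Z_n|)<c$ for $c>0$, so the ``Moreover'' statement (and the paper's claim that $E$, $T_n$, $T$ preserve constants) fails as written. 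The way out is not to control $\sum_n2^{-n}|Z_n|$ but to read the diagonal clause so that $E$ does preserve constants, as the paper asserts --- e.g. $E(p)(x,x)=\frac12(p(a,a)+p(b,b))$ for $x\in\U$, which is still $0$ on pseudometrics and still determined by $p|A\times A$ since $a,b\in A$. Under that reading the exceptional set disappears: every value of the integrand is a convex combination of values of $p$ on $A\times A$, and the ``Moreover'' clause follows at once from your first computation. Note finally that the paper's actual uses of the lemma (Lemmas~\ref{L2} and \ref{L3}) need only the two-sided monotonicity and the consequence that $|p|\le c$ on $A\times A$ implies $|T(p)(y,y')|\le c$, both of which your argument does establish even with the literal definition of $E$.
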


\begin{lemma}\label{L2} The operator $T:\IR^{X\times X}\to\IR^{Y\times Y}$ is continuous
with respect to the uniform, point-wise or compact-open topologies on the
function spaces $\IR^{X\times X}$ and $\IR^{Y\times Y}$.
\end{lemma}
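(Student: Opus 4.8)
The plan is to reduce the continuity of $T$ in all three topologies to a single point-wise estimate coming from Lemma~\ref{L1}. First I would fix $y,y'\in Y$ and set $A=\supp(h(y))\cup\supp(h(y'))\cup\{a,b\}$, a finite subset of $X$. For $q\in\IR^{X\times X}$ put $M=\sup\{|q(x,x')|:(x,x')\in A\times A\}$, so that on $A\times A$ the function $q$ is squeezed between the constant functions $-M$ and $M$. Applying the monotonicity part of Lemma~\ref{L1} to the pairs $(-M,q)$ and $(q,M)$ and the constant part to $\pm M$, I would obtain $-M\le T(q)(y,y')\le M$, i.e.
\[
|T(q)(y,y')|\le\sup_{(x,x')\in A\times A}|q(x,x')|. \qquad(\ast)
\]
Since each of the three topologies is a linear topology and $T$ is linear, it suffices to verify continuity of $T$ at the zero function, and $(\ast)$ is tailored for exactly this.

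For the point-wise topology I would fix $(y,y')$, so that $A$ is a fixed finite set; then $(\ast)$ shows that the basic neighborhood $\{q:|q(x,x')|<\e\text{ for all }(x,x')\in A\times A\}$ of $0$ is sent by $q\mapsto T(q)(y,y')$ into $(-\e,\e)$, which is point-wise continuity. For the uniform topology the set $A$ lies in $X$, hence $\sup_{A\times A}|q|\le\|q\|_\infty$ and $(\ast)$ upgrades to $\|T(q)\|_\infty\le\|q\|_\infty$; thus $T$ is a contraction for the sup-norm and in particular uniformly continuous. Both of these I expect to be immediate once $(\ast)$ is in hand.

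The compact-open case is where I anticipate the real difficulty, because now $A=A(y,y')$ varies with the point and must be absorbed into one fixed compact set. Given a compact set $K\subset Y\times Y$, I would let $Y_0=\pr_1(K)\cup\pr_2(K)$, a compact subset of $Y$, and consider $C:=\bigcup_{y\in Y_0}u(y)$. The key step is that $C$ is compact: the restriction $u|Y_0$ is upper semi-continuous with finite, hence compact, values, and for such a map on a compact space the union of the values is compact. One sees this by covering each finite set $u(y)$ by finitely many members of a prescribed open cover of $C$, using upper semi-continuity to pass to the open neighborhoods $\{z\in Y_0:u(z)\subset U_y\}$ of the points $y$, and extracting a finite subcover of $Y_0$. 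Since $\supp(h(y))\subset u(y)$ by Proposition~\ref{P2}, for every $(y,y')\in K$ the finite set $A(y,y')$ lies in the compact set $L:=C\cup\{a,b\}\subset X$, whence $(\ast)$ gives
\[
\sup_{(y,y')\in K}|T(q)(y,y')|\le\sup_{(x,x')\in L\times L}|q(x,x')|.
\]
As $L\times L$ is compact in $X\times X$, this is precisely the estimate required for continuity of $T$ in the compact-open topology. The main obstacle is therefore the compactness of $C$, and it is upper semi-continuity of $u$ together with compactness of $Y_0$ that makes the union-of-supports argument succeed.
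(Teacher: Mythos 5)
Your proof is correct and follows essentially the same route as the paper: all three continuity statements are reduced to the point-wise bound supplied by Lemma~\ref{L1}, with the compact-open case handled by showing that $\bigcup\{u(y):y\in\pr_1(K)\cup\pr_2(K)\}$ is a compact subset of $X$. The only difference is cosmetic: the paper cites \cite{FF} for the compactness of the union of values of a finite-valued upper semi-continuous map on a compactum, whereas you prove it directly (correctly), and the paper derives uniform continuity from positivity plus preservation of constants, which is the same estimate as your $(\ast)$.
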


\begin{proof} Because the operator $T$ is positive and preserves constant
functions, it is continuous
with respect to the uniform convergence of functions.

Let us show that the operator $T$ is continuous with respect to the
point-wise convergence of functions. For this, fix points $y,y'\in Y$ and
notice that  the set $A=\{a,b\}\cup\supp(h(y))\cup\supp(h(y'))$ is finite.
By Lemma 1,  for a function
$p:X\times X\to \IR$ the inequality $|p(x,x')|\le 1$ for
every $(x,x')\in A\times A$ implies $|T(p)(y,y')|\le 1$. This means that the
operator $T$ is continuous with respect to the point-wise convergence of
functions.

To show that  $T$ is continuous with respect to
the compact-open topology fix a compactum $C\subset Y\times Y$ and notice that
the set $K'=\bigcup\{u(y)\mid y\in \pr_1(C)\cup \pr_2(C)\}\subset X$ is
compact because of the upper-semicontinuity of the map $u:Y\to\exp_\omega X$
(see \cite[Theorem VI.7.10]{FF}) (by $\pr_i:Y\times Y\to Y$
we denote the projection onto the
corresponding factor). Consider the compact set $K=K'\cup\{a,b\}$. Then
 $\supp(h(y))\cup
\supp(h(y'))\subset u(y)\cup u(y')\subset K$ for every $(y,y')\in C$. Now
Lemma~\ref{L1} yields that for a function $p:X\times X\to \IR$ if $|p(x,x')|\le 1$
for every $(x,x')\in K\times K$ then $|T(p)(y,y')|\le 1$ for every $(y,y')\in
C$. But this means that the operator $T$ is continuous in the compact-open
topology. 
\end{proof}

\begin{lemma}\label{L3} The operator $T$ preserves continuous functions.
\end{lemma}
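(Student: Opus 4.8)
The plan is to exploit the fact that continuity is a local property together with the pointwise formula provided by Lemma~\ref{L1}. Fix an arbitrary point $(y_0,y_0')\in Y\times Y$; it suffices to prove that $T(p)$ is continuous at $(y_0,y_0')$ for every continuous $p\in\IR^{X\times X}$. The difficulty is that a continuous $p$ need not be bounded, whereas each $T_n$ (and hence $T$) is only known to preserve \emph{bounded} continuous functions. The idea is to replace $p$, near $(y_0,y_0')$, by a bounded continuous function $\tilde p$ that agrees with $p$ on all the finite ``support'' sets that Lemma~\ref{L1} sees, and then to invoke that $T$ preserves bounded continuous functions to conclude that $T(\tilde p)$, and therefore $T(p)$, is continuous at $(y_0,y_0')$.

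To construct $\tilde p$, set $F_0=u(y_0)\cup u(y_0')\cup\{a,b\}$, which is a finite subset of $X$ since $u$ takes values in $\exp_\omega X$. Using the continuity of $p$ and the finiteness of $F_0$, I would first choose an open neighborhood $V\supset F_0$ in $X$ on which $p$ is bounded on the square $V\times V$, say $|p|\le M$ there; this is a routine compactness-type argument over the finitely many pairs of points of $F_0$. Define $\tilde p=\max\{-M,\min\{M,p\}\}$. Then $\tilde p$ is continuous and bounded, and $\tilde p=p$ on the set where $|p|\le M$, in particular on $V\times V$. Finally, by the upper semicontinuity of $u$, the sets $W=\{y\in Y: u(y)\subset V\}$ and $W'=\{y'\in Y: u(y')\subset V\}$ are open and contain $y_0$ and $y_0'$, respectively.

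It remains to compare $T(p)$ and $T(\tilde p)$ on $W\times W'$. For $(y,y')\in W\times W'$ put $A=\supp(h(y))\cup\supp(h(y'))\cup\{a,b\}$. Since $\supp(h(y))\subset u(y)$ and $\supp(h(y'))\subset u(y')$, we have $A\subset u(y)\cup u(y')\cup\{a,b\}\subset V$, so $A\times A\subset V\times V$, where $p$ and $\tilde p$ coincide. Applying Lemma~\ref{L1} to the pair $p,\tilde p$ (using the monotonicity in both directions, since $p|A\times A=\tilde p|A\times A$) yields $T(p)(y,y')=T(\tilde p)(y,y')$ for every $(y,y')\in W\times W'$. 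As $\tilde p$ is a bounded continuous function and $T$ preserves bounded continuous functions, $T(\tilde p)$ is continuous; hence $T(p)=T(\tilde p)$ on the neighborhood $W\times W'$ of $(y_0,y_0')$ is continuous at that point. Since $(y_0,y_0')$ was arbitrary, $T(p)$ is continuous on all of $Y\times Y$.

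The main obstacle, and the crux of the argument, is precisely the interplay between the locality encoded in Lemma~\ref{L1} — the value $T(p)(y,y')$ depends on $p$ only through its restriction to the finite set $A\times A$ — and the upper semicontinuity of $u$, which confines all these finite sets to the single fixed open set $V$ as $(y,y')$ ranges over the neighborhood $W\times W'$. This is exactly what permits truncating $p$ to a bounded function without altering $T(p)$ near $(y_0,y_0')$; the boundedness of $p$ on $V\times V$ and the continuity of the truncation $\tilde p$ are then only routine verifications.
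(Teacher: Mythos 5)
Your proposal is correct and follows essentially the same route as the paper's proof: truncate $p$ to a bounded continuous $\tilde p$ agreeing with $p$ on a neighborhood $V\times V$ of the finite set $(\{a,b\}\cup u(y_0)\cup u(y_0'))^2$, use upper semicontinuity of $u$ to confine the supports over a neighborhood of $(y_0,y_0')$ to $V$, and conclude via Lemma~\ref{L1} that $T(p)$ and $T(\tilde p)$ coincide there. The only cosmetic difference is that the paper fixes the bound $M$ as the maximum of $|p|$ over the finite set first and then picks the neighborhood, while you pick the neighborhood and bound simultaneously; the substance is identical.
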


\begin{proof} Let $p:X\times X\to \IR$ be a continuous function. Fix any point
$(y_0,y_0')\in Y\times Y$. Let $M=\max\{|p(x,x')|: x,x'\in \{a,b\}\cup u(y_0)\cup
u(y_0')\}$. Since the map $p$ is continuous, there is a neighborhood $U\subset X$
of the compactum $\{a,b\}\cup u(y_0)\cup u(y_0')$ such that $|p(x,x')|<M+1$
for every $x,x'\in U$. Since the map $u:Y\to\exp_\omega X$ is upper-semicontinuous,
there are neighborhoods $V,V'\subset Y$ of $y_0,y_0'$ respectively such that
for every $y\in V$ and $y'\in V'$ we have $u(y)\cup u(y')\subset U$.

Now consider the bounded continuous function $\tilde p:X\times X\to \IR$
defined by the formula
$$
\tilde p(x,x')=\begin{cases} p(x,x'), & \mbox{ if } -M-1\le p(x,x')\le M+1\\
M+1, &\mbox{ if } p(x,x')\ge M+1\\
-M-1, &\mbox{ if } p(x,x')\le -M-1.\end{cases}
$$
Obviously that $\tilde p|U= p|U$. Moreover, since the operator
$T$ preserves bounded continuous functions, the map $T(\tilde
p):Y\times Y\to\IR$ is continuous. Now remark that for every $(y,y')\in V\times V'$ \
$\supp(h(y))\cup\supp(h(y'))\subset \{a,b\}\cup u(y)\cup u(y')\subset U$. Since $\tilde
p|U\times U=p|U\times U$, by Lemma 1, $T(p)(y,y')=T(\tilde p)(y,y')$. Therefore,
$T(p)|V\times V'=T(\tilde p)|V\times V'$ and the function $T(p)$ is continuous.
\end{proof}

\begin{lemma}\label{L4} The operator $T$ preserves metrics.
\end{lemma}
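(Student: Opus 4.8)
The plan is to use that $T$ already preserves pseudometrics, so for a metric $\rho$ on $X$ the extension $T(\rho)$ is automatically a pseudometric on $Y$; it only remains to establish positivity, i.e.\ that $T(\rho)(y,y')>0$ whenever $y\ne y'$. Since $T(\rho)=\sum_{n}2^{-n}T_n(\rho)$ and each summand is nonnegative (every $T_n$ is positive and $\rho\ge 0$), it suffices to exhibit, for each pair $y\ne y'$, a single index $n$ with $T_n(\rho)(y,y')>0$. I would fix $n$ large enough that $n\,d(y,X)\ge 1$ and $n\,d(y',X)\ge 1$. Then $\min\{1,n\,d(z,X)\}=1$ for any $z\in\{y,y'\}\setminus X$, so by the definition of $f_n$ we have $f_n(z)=h_n(z)$ on all of $[0,1)$ for such $z$, while $f_n(z)$ is the constant function $z$ when $z\in X$. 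This lets me read off $T_n(\rho)(y,y')=\int_0^1 E(\rho)(f_n(y)(t),f_n(y')(t))\,dt$ directly from the explicit formula for $E$.

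Then I would split into three cases. If $y,y'\in X$ the claim is immediate, since each $T_n$ is an extension operator and $T(\rho)(y,y')=\rho(y,y')>0$. If exactly one of them, say $y$, lies in $X$ while $y'\notin X$, then for large $n$ the function $f_n(y)$ is constantly $y\in X$ whereas $f_n(y')=h_n(y')$ takes values in $\U_n\subset\U$; the second line of the definition of $E$ gives $E(\rho)(y,h_n(y')(t))=\tfrac12\rho(y,a)+\tfrac12\rho(y,b)$ for every $t$, so $T_n(\rho)(y,y')=\tfrac12\rho(y,a)+\tfrac12\rho(y,b)$, which is strictly positive because $a\ne b$ forces $\rho(y,a)>0$ or $\rho(y,b)>0$.

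The main work is the case $y,y'\notin X$. Here I would additionally require $2^{-n}<d(y,y')$, which is possible since $d$ is a metric and $d(y,y')>0$. Because $\diam_d(U)<2^{-n}$ for every $U\in\U_n$, no member of $\U_n$ can contain both $y$ and $y'$. Choosing any $U\in\U_n$ with $\lambda^n_U(y)>0$ (one exists, as the numbers $\lambda^n_U(y)$ sum to $1$), I get $y\in U$, hence $y'\notin U$, hence $\lambda^n_U(y')=0$ because $\supp\lambda^n_U\subset U$. The key observation is that the Lebesgue measure of $\{t:h_n(y)(t)=U\}$ equals $\lambda^n_U(y)$ by the very definition of $h_n$; on this interval $h_n(y')$ never takes the value $U$ (as $\lambda^n_U(y')=0$), so the two step functions differ there, giving distinct elements of $\U$. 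By the fourth line of the definition of $E$, the integrand equals $\rho(a,b)$ on that interval, while $E(\rho)\ge 0$ everywhere else, so
$$
T_n(\rho)(y,y')\ \ge\ \rho(a,b)\,\lambda^n_U(y)\ >\ 0,
$$
using $\rho(a,b)>0$. This is the step I expect to be the crux: translating ``$y\ne y'$'' into a genuine difference of the associated $\HM(\U_n)$-step functions on a set of positive measure, which is exactly what a sufficiently fine cover $\U_n$ together with its subordinate partition of unity provides.
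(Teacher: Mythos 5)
Your proposal is correct and follows essentially the same route as the paper: reduce to showing $T_n(p)(y,y')>0$ for a single well-chosen $n$, split into the three cases according to whether $y,y'$ lie in $X$, and in the mixed and off-$X$ cases read off the positivity from the explicit formula for $E$ together with the fineness of the cover $\U_n$. The only (harmless) differences are that in the last case the paper notes $\supp(h_n(y))\cap\supp(h_n(y'))=\emptyset$ and gets the integrand identically equal to $p(a,b)$, where you settle for the lower bound $p(a,b)\lambda^n_U(y)$, and in the mixed case the paper bounds $\tfrac12(p(y,a)+p(y,b))\ge\tfrac12 p(a,b)$ by the triangle inequality rather than arguing that $y$ cannot coincide with both $a$ and $b$.
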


\begin{proof} Let $p$ be a metric on $X$. Since the operator $T$ preserves
pseudometrics, it remains to prove  that
$T(p)(y,y')\not= 0$ for distinct $y,y'\in Y$. So, fix $y,y'\in Y$ with
$y\not=y'$.

If $y,y'\in X$ then $T(p)(y,y')=p(y,y')\ne 0$ because $p$ is a metric on
$X$. Now assume that $y\in X$ and $y'\notin X$. Then $d(y',X)>\frac1n$
for some $n\in\IN$. Consequently,
$f_n(y)=y\in X\subset \HM(X\sqcup\U)$ and $f_n(y')=h_n(y')\in
\HM(\U_n)\subset \HM(X\sqcup \U)$. By the property of the operator $E$, we
have $E(p)(y,h_n(y')(t))=\frac12(p(y,a)+p(y,b))\ge\frac12p(a,b)$ for every
$t\in[0,1)$ and thus
$$
T_n(p)(y,y')=
\int_0^1E(p)(y,h_n(y')(t))dt\ge\frac12 p(a,b)>0.$$
This yields
$T(p)(y,y') \ge 2^{-n}T_n(p)(y,y') > 0$.

Now assume that $y,y'\in Y\setminus X$. Then there is an $n\in\IN$ such that
$d(y,X)>n^{-1}$, $d(y',X)>n^{-1}$ and $d(y,y')>2^{-n+1}$.
In this case, $f_n(y)=h_n(y)$ and $f_n(y')=h_n(y')$.
Since $\diam(U)<2^{-n}$
for $U\in\U_n$, there is no $U\in \U_n$ with $\{y,y'\}\subset U$.
Consequently, $\supp(h_n(y))\cap \supp(h_n(y'))=\emptyset$. By the
definition of the metric $E(p)$, $E(p)(h_n(y)(t),h_n(y')(t))=p(a,b)$ for
every $t\in[0,1)$.  Then
$$
2^nT(p)(y,y')\ge T_n(p)(y,y')=
\int_0^1E(p)(h_n(y)(t),h_n(y')(t))dt=p(a,b)>0
$$
Therefore, $T(p)$ is
a metric on $Y$.
\end{proof}

\section{Proof of Theorem~\ref{T2}}

In this section we suppose that $Y$ is a metrizable uniform
space and the metric $d$ generates the uniformity of $Y$. Moreover, the
map
$u$ constructed in Proposition~\ref{P2} has the following property: $u(y)\subset
O_d(y,2d(y,X))$ for every $y\in Y$.

In an obvious way Theorem~\ref{T2} follows from Theorem~\ref{T1} and the subsequent four
lemmas.

\begin{lemma}\label{L5} The operator $T$ preserves the class of
dominating metrics.
\end{lemma}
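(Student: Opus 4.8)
The plan is to verify the defining property of a dominating metric directly, using the explicit series formula for $T$ and the definition of the operator $E$. Since $Y$ is metrized by $d$, the metric $T(p)$ dominates $d$ precisely when the formal identity $(Y,T(p))\to (Y,d)$ is continuous at every point, so it suffices to prove: for each $y_0\in Y$ and each $\e>0$ there is $\delta>0$ such that $d(y_0,y)\ge\e$ implies $T(p)(y_0,y)\ge\delta$. Throughout I would use the estimate
$$T(p)(y_0,y)\ \ge\ 2^{-n}T_n(p)(y_0,y)\ =\ 2^{-n}\int_0^1 E(p)\bigl(f_n(y_0)(t),f_n(y)(t)\bigr)\,dt$$
for a single well-chosen $n$, together with the fact that $p(a,b)>0$ and that $E(p)$ evaluated on a point of $X$ and a point of $\U_n$ equals $\tfrac12\bigl(p(\,\cdot\,,a)+p(\,\cdot\,,b)\bigr)\ge\tfrac12 p(a,b)$ (by the triangle inequality), while $E(p)$ on two distinct points of $\U_n$ equals $p(a,b)$. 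I may assume WLOG that $\e\le 1$, as shrinking $\e$ only strengthens the conclusion.

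The first case is $y_0\in Y\setminus X$, which I expect to be the clean one. Here $d(y_0,X)>0$, so I would fix $n$ with $2^{-n}\le\e$ and $n\,d(y_0,X)\ge 1$; the latter forces $f_n(y_0)\equiv h_n(y_0)$ on all of $[0,1)$. If $d(y_0,y)\ge\e\ge 2^{-n}$, then since $\diam_d(U)<2^{-n}$ for $U\in\U_n$, no member of $\U_n$ contains both $y_0$ and $y$; as the partition of unity is subordinate to $\U_n$, the index sets $\{U:\lambda^n_U(y_0)>0\}$ and $\{U:\lambda^n_U(y)>0\}$ are disjoint, whence $h_n(y_0)(t)\ne h_n(y)(t)$ for every $t$. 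Consequently $E(p)(f_n(y_0)(t),f_n(y)(t))\ge\tfrac12 p(a,b)$ for all $t$ (the value is $p(a,b)$ where $f_n(y)$ is in the $h_n$-regime, and at least $\tfrac12 p(a,b)$ where $f_n(y)$ has switched to $h(y)\in\HM(X)$), giving $T_n(p)(y_0,y)\ge\tfrac12 p(a,b)$ and hence the desired uniform bound $\delta=2^{-n-1}p(a,b)$.

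The second case, $y_0\in X$, is the one I expect to be the main obstacle and would split into three subcases. For $y\in X$ with $d(y_0,y)\ge\e$ I use $T(p)(y_0,y)=p(y_0,y)$ and the domination of $p$ on $X$, which yields a lower bound $\delta_1>0$. For $y\notin X$ with $d(y,X)\ge\e/4$, I take $n\approx\lceil 4/\e\rceil$ so that $n\,d(y,X)\ge 1$ and $f_n(y)$ stays entirely in the $\U_n$-regime; since $f_n(y_0)\equiv y_0\in X$, the integrand is the constant $\tfrac12(p(y_0,a)+p(y_0,b))\ge\tfrac12 p(a,b)$, giving a bound depending only on $\e$. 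The delicate subcase is $y\notin X$ with $d(y,X)<\e/4$ but $d(y_0,y)\ge\e$: here $y$ is near $X$ yet far from $y_0$. I would exploit the property $u(y)\subset O_d(y,2d(y,X))$ guaranteed by Proposition~\ref{P2}, which gives $\supp(h(y))\subset O_d(y,\e/2)$, so every value $h(y)(t)$ satisfies $d(y_0,h(y)(t))\ge d(y_0,y)-\e/2\ge\e/2$. Domination of $p$ then forces $p(y_0,h(y)(t))\ge\delta_1'>0$ uniformly; taking $n=1$, whose second (i.e.\ $h$-)regime has length $1-\min\{1,d(y,X)\}>3/4$, yields $T_1(p)(y_0,y)\ge\tfrac34\delta_1'$. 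Collecting the three subcase constants into a single $\delta>0$ completes the bound at points of $X$, and the two cases together prove that $T(p)$ is dominating.

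The crux is the last subcase: it is exactly where the pointwise control $u(y)\subset O_d(y,2d(y,X))$ from Proposition~\ref{P2} (available because $Y$ is metrizable) is indispensable, as it is what transfers domination of $p$ on $X$ into a lower bound for $T(p)$ on points of $Y\setminus X$ that approach $X$ away from $y_0$.
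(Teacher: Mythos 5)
Your proof is correct and follows essentially the same route as the paper's: the same split into $y_0\notin X$ (disjoint supports of $h_n(y_0)$ and $h_n(y)$ force the integrand $\ge\tfrac12 p(a,b)$) and $y_0\in X$ with the two sub-regimes $d(y,X)\ge\e/4$ and $d(y,X)<\e/4$, the latter handled via $u(y)\subset O_d(y,2d(y,X))$ and the domination of $p$ at $y_0$, exactly as in the paper's Case 2). Your only departures are cosmetic (choosing $n$ with $n\,d(y_0,X)\ge1$ rather than carrying the $\min\{1,n\,d(y',X)\}$ factor, and treating $y\in X$ as an explicit third subcase).
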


\begin{proof}
Let $p$ be a dominating metric for $X$.
To show that the metric $T(p)$ dominates the topology of $Y$, it suffices
for every $y\in Y$ and every $\e\in(0,1]$ to find $\delta>0$ such that
$T(p)(y,y')\ge\delta$ for every $y'\in Y$ with $d(y',y)>\e$.

First we consider the case $y\notin X$. Then we can find $n\in\IN$ such
that $d(y,X)>\frac1n$ and $2^{-n+1}<\e$. Let $\delta=2^{-n-1}p(a,b)$ and
$y'\in Y$ be any point with $d(y,y')>\e$. Then $d(y,y')>2^{-n+1}$ and by
the choice of the cover $\U_n$ and the map $h_n$, we have
$\supp(h_n(y))\cap\supp(h_n(y'))=\emptyset$. As we have
observed in the proof
of Lemma 4, this implies $E(p)(h_n(y')(t),h_n(y')(t))=p(a,b)$ for every
$t\in[0,1)$. Besides, it follows that
$E(p)(h_n(y)(t),h(y')(t))\ge\frac12p(a,b)$. Then
$$
2^nT(p)(y,y')\ge T_n(p)(y,y')=
\int_0^1 E(p)(h_n(y)(t),f_n(y')(t))dt\ge
\frac12p(a,b)=2^n\delta.
$$

Now assume that $y\in X$. Let $n\in\IN$ be such that $2^{-n+1}<\e$. Since
the metric $p$ is dominating for $X$, there is $\eta>0$ such that
$p(y,x)>\eta$ for every $x\in X$ with $d(y,x)>\e$.
Let $\delta=\min\{2^{-n-1}p(a,b), n\e2^{-n-3} p(a,b), 3\eta/8\}$ and fix any
point $y'\in Y$ with $d(y,y')>\e$. To verify that $T(p)(y,y')\ge\delta$,
consider two cases:

1) $d(y',X)\ge\frac\e4$. By the property of the metric
$E(p)$, we have $E(p)(y,h_n(y')(t))\ge\frac12p(a,b)$ for every
$t\in[0,1)$. Then
$$
\begin{aligned}
2^nT(p)(y,y')& \ge T_n(p)(y,y')=\int_0^1E(p)(y,f_n(y')(t))dt
\ge\int_0^{\min\{1,d(y',X)\}}E(p)(y,h_n(y')(t))dt\ge\\
&\ge
\min\{1,n\,d(y',X)\}\frac12p(a,b)>
2^{-1}\min\{1,{n\e}/4\}p(a,b) \ge 2^n\delta.
\end{aligned}
$$

Now pass to the other case:

2) $d(y',X)<\frac\e4$. Then
$\supp(h(y'))\subset u(y')\subset O_d(y',\frac\e2)$
and because $d(y,y')>\e$, we get
$d(y,h(y')(t))>\frac\e2$ for every $t\in[0,1)$.
By the choice of $\eta$,
this implies $p(y,h(y')(t))>\eta$ for every $t\in[0,1)$. Then
$$
\begin{aligned}
2T(p)(y,y')&\ge T_1(p)(y,y')
=\int_0^1 E(p)(y,f_1(y')(t))dt \ge \\
&\ge\int_{d(y',X)}^1 E(p)(y,h(y')(t))dt
\ge\int^1_{\e/4}p(y,h(y')(t))dt
\ge(1-\frac\e4)\eta\ge\frac34\cdot\eta\ge 2\delta.
\end{aligned}
$$
\end{proof}

\begin{lemma}\label{L6} The operator $T$ preserves uniformly dominating
metrics.
\end{lemma}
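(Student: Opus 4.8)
The plan is to strengthen the estimates in the proof of Lemma~\ref{L5}, keeping track that every constant depends only on $\e$ and not on the chosen points. Since $d$ generates the uniformity of $Y$, the hypothesis that $p$ is uniformly dominating on $X$ translates into the uniform separation property: for every $\e>0$ there is $\eta=\eta(\e)>0$ such that $p(x,x')\ge\eta$ whenever $x,x'\in X$ satisfy $d(x,x')\ge\e$. Dually, to prove that $T(p)$ is uniformly dominating on $Y$ it suffices to produce, for each $\e\in(0,1]$, a single $\delta>0$ (independent of the points) with $T(p)(y,y')\ge\delta$ for all $y,y'\in Y$ satisfying $d(y,y')>\e$.

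First I would fix $\e\in(0,1]$, set the threshold $\tau=\e/8$, and assume without loss of generality that $R:=\max\{d(y,X),d(y',X)\}=d(y,X)$. In the \emph{near case} $R\le\tau$ I would use the operator $T_1$: since $\supp(h(y))\subset O_d(y,2d(y,X))$ and $\supp(h(y'))\subset O_d(y',2d(y',X))$, for every $t\ge d(y,X)$ both $f_1(y)(t)=h(y)(t)$ and $f_1(y')(t)=h(y')(t)$ lie in $X$ and, by the triangle inequality in $d$, satisfy $d(h(y)(t),h(y')(t))\ge d(y,y')-4R>\e-\frac{\e}{2}=\frac{\e}{2}$. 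The uniform separation property then gives $E(p)(h(y)(t),h(y')(t))=p(h(y)(t),h(y')(t))\ge\eta(\e/2)$ on the interval $[d(y,X),1)$, whose length is at least $1-\tau$, so $T(p)(y,y')\ge\frac12 T_1(p)(y,y')\ge\frac12(1-\tau)\eta(\e/2)$.

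In the \emph{far case} $R>\tau$ I would use $T_n$ for a fixed $n=n(\e)$ chosen so that $n\tau\ge1$ and $2^{-n}\le\e$. Then $n\,d(y,X)>n\tau\ge1$ forces $f_n(y)(t)=h_n(y)(t)\in\U_n$ for every $t\in[0,1)$, and the crucial observation is that the integrand $E(p)(f_n(y)(t),f_n(y')(t))$ is bounded below by $\frac12 p(a,b)$ at \emph{every} $t$, regardless of where $y'$ sits: when $f_n(y')(t)\in\U_n$ the two cover elements are distinct, since no $U\in\U_n$ contains both $y$ and $y'$ (as $\diam_d(U)<2^{-n}\le\e<d(y,y')$), so $E(p)=p(a,b)$; and when $f_n(y')(t)\in X$ the defining formula for $E$ together with $p(a,x')+p(b,x')\ge p(a,b)$ gives $E(p)\ge\frac12 p(a,b)$. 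Hence $T(p)(y,y')\ge 2^{-n}T_n(p)(y,y')\ge 2^{-n-1}p(a,b)$, and taking $\delta=\min\{\frac12(1-\tau)\eta(\e/2),\,2^{-n-1}p(a,b)\}$ completes the argument, as this quantity depends only on $\e$.

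The main obstacle, which is precisely what distinguishes this lemma from Lemma~\ref{L5}, is uniformity: a single scale $n$ cannot simultaneously make the $\U_n$-prefix of the far point fill all of $[0,1)$ and keep the $X$-tail of the near point long, so a naive rerun of Lemma~\ref{L5} yields a $\delta$ that degenerates for points at medium distance from $X$. The device that overcomes this is the position-free lower bound $E(p)\ge\frac12 p(a,b)$ for the mixed $\U_n$-versus-$X$ contributions; it lets the far case avoid any reference to the tail, while the near case is handled purely by uniform domination on a tail of length bounded below. I would finally double-check that $\tau=\e/8$ is small enough to force $d(h(y)(t),h(y')(t))>\e/2$ in the near case, and that a fixed $n$ with $n\tau\ge1$ and $2^{-n}\le\e$ can always be found in the far case.
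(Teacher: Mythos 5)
Your proof is correct and follows essentially the same route as the paper's: a uniform two-case split according to whether the points are near to or far from $X$, with the far case handled by disjointness of supports in $\U_n$ plus the bound $E(p)\ge\frac12p(a,b)$ on mixed terms, and the near case by the inclusion $\supp(h(y))\subset O_d(y,2d(y,X))$ combined with the uniform separation property of $p$. The only (harmless) difference is bookkeeping: you use $T_1$ for the near case and a large $T_n$ for the far case, whereas the paper works with a single $T_n$ and the threshold $2^{-n-2}$.
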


\begin{proof} Let $p$ be a uniformly dominating metric for the uniform
space $X$. To show that the metric $T(p)$ is uniformly dominating for $Y$,
it suffices to verify that the formal identity map $(Y,T(p))\to
(Y,d)$ between the respective metric spaces is uniformly continuous.

Fix any $\e>0$. We have to find $\delta>0$ such that for every $y_1,y_2\in
Y$ the inequality $T(p)(y_1,y_2)<\delta$ implies $d(y_1,y_2)<\e$;
equivalently, $d(y_1,y_2)\ge\e$ implies $T(p)(y_1,y_2)\ge\delta$. To find
such $\delta$, select $n\in\IN$ so that $2^{-n}<\frac\e2$ and
$\frac{n}{2^{n+2}}<\frac12$. Since the metric $p$ is uniformly dominating
for $X$, there exists $\delta>0$ such that $d(x,x')<\frac\e2$ for all
$x,x'\in X$ with $p(x,x')<2^{n+1}\delta$. Moreover, we may take $\delta $
so small that $2^n\delta<\frac{n}{2^{n+3}}p(a,b)$.

We claim that the so-chosen number $\delta$ satisfies our requirements.
To show this, fix any points $y_1,y_2\in Y$ with $d(y_1,y_2)\ge\e$.
Because $T(p)(y_1,y_2)\ge 2^{-n}T_n(y_1,y_2)$, it suffices to verify the
inequality $T_n(y_1,y_2)\ge 2^{n}\delta$. Two cases will be considered
separately:

1) $\max\{d(y_1,X),d(y_2,X)\}\ge2^{-n-2}$. Without loss of
generality, $d(y_1,X)\le d(y_2,X)$. Since $d(y_1,y_2)\ge\e$ and
$\sup_{U\in\U_n}\diam(U) <2^{-n}<\frac\e2$, we get
$\supp(h_n(y_1))\cap\supp(h_n(y_2))=\emptyset$. It follows that

\noindent
$E(p)(h_n(y_1)(t),h_n(y_2)(t))\ge p(a,b)$  and
$E(p)(h_n(y_2),h(y_1))\ge\frac12 p(a,b)$  for every
$t\in[0,1)$.

\noindent
Then
$$
\begin{aligned}
T_n(p)(y_1,y_2)&= \int_0^1E(p)(f_n(y_1)(t),f_n(y')(y_2)(t))dt\ge\\
&\ge\frac12\min\{1,nd(y_2,X)\}p(a,b)\ge\frac12p(a,b)\frac n{2^{n+2}}>2^n\delta.
\end{aligned}
$$

Next, we consider the case:

2) $\max\{d(y_1,X),d(y_2,X)\}<{2^{-n-2}}$.
By the definition of the map $u$, we have $\supp(h(y_i))\subset u(y_i)\subset
O_d(y_i,2d(y_i,X))\subset O_d(y_i,\frac\e4)$ for $i=1,2$. Consequently,
$$
d(h(y_1)(t),h(y_2)(t))>\frac\e2 \text{ \ for every \ }
t\in[0,1)
$$
and by the choice
of $\delta$, we
get $p(h(y_1)(t),h(y_2)(t))>2^{n+1}\delta$. Finally, for the pseudometric
$T_n(p)$ we obtain
$$
\begin{aligned}
T_n(p)(y_1,y_2)&=\int_0^1E(p)(f_n(y_1)(t),f_n(y_2)(t))dt\ge
\int^1_{1-n\max\{d(y_1,X),d(y_2,X)\}}p(h(y_1)(t),h(y_2)(t))dt\ge\\
&\ge(1-n\max\{d(y_1,X),d(y_2,X)\})
2^{n+1}\delta\ge (1-n2^{-n-2})2^{n+1}\delta\ge\frac12 2^{n+1}\delta=2^n\delta.
\end{aligned}
$$
\end{proof}


\begin{lemma}\label{L7} If the uniform space $Y$ is complete, then the operator
$T$ preserves complete continuous uniformly dominating metrics.
\end{lemma}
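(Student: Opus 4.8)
The plan is to build on the three lemmas already at hand. By Lemmas~\ref{L3}, \ref{L4}, and~\ref{L6}, if $p$ is a continuous uniformly dominating metric on $X$, then $T(p)$ is again a continuous uniformly dominating metric on $Y$. Hence the only new property to be checked is the \emph{completeness} of $T(p)$, and I would phrase the task as: assuming $(Y,d)$ is complete, show that the metric $T(p)$ on $Y$ is complete.

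To this end I would take an arbitrary $T(p)$-Cauchy sequence $(y_k)_{k\in\IN}$ in $Y$ and produce a $T(p)$-limit in two stages. First, since $T(p)$ is uniformly dominating (Lemma~\ref{L6}), the formal identity $(Y,T(p))\to(Y,d)$ is uniformly continuous, so $(y_k)$ is also $d$-Cauchy. Because the uniform space $Y$ is complete and $d$ generates its uniformity, the metric space $(Y,d)$ is complete, and therefore $(y_k)$ converges in $d$ to some point $y\in Y$.

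Second, I would upgrade this $d$-convergence to convergence in $T(p)$ using the continuity supplied by Lemma~\ref{L3}. As $d$ generates the topology of $Y$, the relation $y_k\to y$ in $d$ means $(y_k,y)\to(y,y)$ in $Y\times Y$; since $T(p)\colon Y\times Y\to\IR$ is continuous and $T(p)$ is a metric, we get $T(p)(y_k,y)\to T(p)(y,y)=0$. Thus $(y_k)$ converges to $y$ in $T(p)$, and $T(p)$ is complete.

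I do not anticipate a real obstacle: the whole point is to assemble the earlier lemmas in the right order. The single place demanding care is the interplay of topologies in the second stage — the continuity of $T(p)$ is continuity for the original topology of $Y$, which coincides with the $d$-topology, and it is exactly this compatibility that converts the $d$-limit coming from completeness of $Y$ into a $T(p)$-limit. It is worth remarking, finally, that the completeness hypothesis on $p$ is not really exploited: since $X$ is closed in the complete space $(Y,d)$ it is itself $d$-complete, so the very same two-stage argument, applied with $p$ in place of $T(p)$, shows that every continuous uniformly dominating metric on $X$ is automatically complete.
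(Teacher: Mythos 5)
Your proposal is correct and is essentially the paper's argument: the paper's one-line proof is ``observe that in a complete metrizable uniform space every continuous uniformly dominating metric is complete and apply Lemmas~\ref{L3} and~\ref{L6}'', and your two-stage Cauchy-sequence argument is precisely the proof of that observation, applied to $T(p)$ (your closing remark that the completeness hypothesis on $p$ is automatic is the same observation applied on $X$).
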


\begin{proof} Observe that in a complete metrizable uniform space
every continuous uniformly dominating metric is complete and
apply Lemmas 3 and 6.
\end{proof}

\begin{lemma}\label{L8} If the uniform space $Y$ is totally bounded and
$\dim Y\setminus X<\infty$ then the operator $T$ preserves
totally bounded pseudometrics.
\end{lemma}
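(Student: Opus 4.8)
The plan is to reduce the statement to a total-boundedness property of each individual operator $T_n$ and then to invoke the final assertion of Proposition~\ref{P1}. Fix a totally bounded pseudometric $p$ on $X$; being totally bounded it is bounded, so put $M=\sup_{x,x'\in X}p(x,x')<\infty$. From the definition of $E$ one reads off $E(p)\le M$, whence $T_n(p)(y,y')=\int_0^1 E(p)(f_n(y)(t),f_n(y')(t))\,dt\le M$ for every $n$ and all $y,y'\in Y$. Since $T_n$ is positive and $p\ge 0$, also $T_n(p)\ge 0$, so the tail $\sum_{n>N}2^{-n}T_n(p)$ is squeezed between $0$ and $2^{-N}M$ uniformly on $Y\times Y$.

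First I would record two elementary stability facts. A finite sum of totally bounded pseudometrics is totally bounded (for $\e>0$ take, for each summand, a finite cover of $Y$ by sets of small diameter and pass to the common refinement). And if $\sigma$ is a totally bounded pseudometric on a space $Z$ and $\varphi:Y\to Z$ is any map, then the pullback $\varphi^*\sigma(y,y')=\sigma(\varphi(y),\varphi(y'))$ is a totally bounded pseudometric on $Y$ (pull back along $\varphi$ a finite cover of $\varphi(Y)$ by sets of small $\sigma$-diameter). Granting these, the reduction is immediate: given $\e>0$ choose $N$ with $2^{-N}M<\e/2$ and set $\rho=\sum_{n\le N}2^{-n}T_n(p)$. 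Since $0\le T(p)-\rho\le \e/2$ pointwise, every set of $\rho$-diameter $<\e/2$ has $T(p)$-diameter $<\e$; hence it suffices to show that each $T_n(p)$ is a totally bounded pseudometric on $Y$, for then $\rho$ is totally bounded as a finite sum.

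Now fix $n$; here both hypotheses of the lemma are used. Because $d$ is totally bounded, the cover $\U_n$ was chosen finite, so $X\sqcup\U_n$ arises from $X$ by adjoining finitely many points, and consequently the restriction of the pseudometric $E(p)$ to $X\sqcup\U_n$ is again totally bounded (a finite $\e$-net for $p$ together with the finite set $\U_n$ is a finite $\e$-net for $E(p)$). Because $\dim(Y\setminus X)<\infty$ we have arranged $h(Y)\subset\HM_k(X)$ for some $k$, so $h(y)$ has at most $k$ steps; together with the at most $|\U_n|$ steps of $h_n(y)\in\HM(\U_n)$, the concatenation $f_n(y)$ has boundedly many steps, i.e. $f_n(Y)\subset\HM_m(X\sqcup\U_n)$ with $m=|\U_n|+k$. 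By the final assertion of Proposition~\ref{P1} (applied with $X$ replaced by $X\sqcup\U_n$ and with the totally bounded pseudometric $E(p)$), the pseudometric $hm(E(p))$ is totally bounded on $\HM_m(X\sqcup\U_n)$. Since $T_n(p)=(f_n\times f_n)^*\,hm(E(p))$ is the pullback of this totally bounded pseudometric along $f_n:Y\to\HM_m(X\sqcup\U_n)$, the second stability fact shows $T_n(p)$ is totally bounded on $Y$, completing the argument.

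The step I expect to be the crux is this passage to a single level $\HM_m$. Proposition~\ref{P1} only guarantees total boundedness of $hm(E(p))$ on each fixed $\HM_m(X\sqcup\U_n)$, never on all of $\HM(X\sqcup\U_n)$; indeed $E(p)$ on the full $X\sqcup\U=\bigsqcup_n\U_n$ is typically not totally bounded, since the points of $\U$ are pairwise at distance $p(a,b)$. It is precisely the two standing hypotheses that save the argument: total boundedness of $Y$ makes each $\U_n$ finite, while finite-dimensionality of $Y\setminus X$ caps the number of steps of $h(y)$, so that for each fixed $n$ the entire image $f_n(Y)$ lands in one level $\HM_m(X\sqcup\U_n)$. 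The only point requiring a glance back at the definitions of $h_n$ and $f_n$ is the verification that $f_n(y)$ really has at most $|\U_n|+k$ steps.
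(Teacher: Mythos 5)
Your proposal is correct and follows essentially the same route as the paper: reduce to total boundedness of each $T_n(p)$ (via the uniform tail bound, which the paper leaves implicit), use total boundedness of $d$ to make $\U_n$ finite so that $E(p)$ is totally bounded on $X\sqcup\U_n$, use $\dim(Y\setminus X)<\infty$ to place $f_n(Y)$ in a single level $\HM_m(X\sqcup\U_n)$, and conclude by the final assertion of Proposition~\ref{P1} together with pullback along $f_n$. Your identification of the single-level step as the crux, and of the exact role of each hypothesis, matches the paper's argument.
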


\begin{proof} Fix a totally bounded pseudometric $p$ on $X$.
It is enough to show that each pseudometric $T_n(p)$ is totally bounded.
Fix any $n\in\IN$. Since the metric $d$ on $Y$ is totally
bounded, by the construction, the cover $\U_n$ is finite.
Then the metric $E(p)$
on $X\sqcup \U_n$ is totally bounded.
Let $k>|\U_n|$ be such that $h(Y)\subset
\HM_k(X)$. Then $f_n(Y)\subset \HM_{2k}(X\sqcup \U_n)$ and $T_n(p)(y,y')=
hm(E(p))(f_n(y),f_n(y'))$ for every $y,y'\in Y$. By
Proposition 1, the pseudometric $hm(E(p))$ is totally bounded
on $\HM_{2k}(X\sqcup\U_n)$. Hence,
the  pseudometric $T_n(p)$ is totally bounded on $Y$.
\end{proof}

\section{Proof of Theorem~\ref{T3}}

Assume that $G$ is a compact group, $\mu$ the Haar measure on
$G$, $Y$ is a (left) $G$-space and $X$ is a closed subspace of
$Y$ consisting of at least two points and invariant under the
action of $G$.
For $Z \in \{X,Y\}$ by $C(Z\times Z)$ we denote
the linear lattice of continuous functions on $Z\times Z$,
equipped with the compact-open topology and by
$C_{inv}(Z\times Z)$ its linear subspace
consisting of continuous {\it invariant} functions, i.e., such
that $f(gx,gy)=f(x,y)$ for every $g\in G$ and $x,y\in X$.

\begin{proposition}\label{P3} The averaging operator $A:C(Y\times Y)\to
C_{inv}(Y\times Y)$ defined by
$$
Af(y,y') = \int_G f(gy,gy')d\mu \text{ \ for\ }
f \in C(Y \times Y),\quad y,y' \in Y
$$
is a continuous
retraction of $C(Y \times Y)$ onto $C_{inv}(Y \times Y)$.
The operator $A$ takes constants, pseudometrics, metrics,
admissible metrics into
constants, invariant pseudometrics, invariant metrics,
invariant admissible metrics, respectively.
\end{proposition}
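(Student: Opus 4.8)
The plan is to verify each asserted property of the averaging operator $A$ separately, since they are largely independent. First I would observe that $A$ is a well-defined linear operator: because $G$ is compact and $\mu$ is the (normalized) Haar measure, and because $f\in C(Y\times Y)$ together with the continuity of the action $G\times Y\to Y$ makes $(g,y,y')\mapsto f(gy,gy')$ continuous, the integral $Af(y,y')=\int_G f(gy,gy')\,d\mu$ exists and depends continuously on $(y,y')$ by the usual uniform-continuity-on-compacta argument. Hence $Af\in C(Y\times Y)$. That $Af$ is invariant follows from the left-invariance of the Haar measure: substituting $g\mapsto gh^{-1}$ shows $Af(hy,hy')=\int_G f(ghy,ghy')\,d\mu=\int_G f(gy,gy')\,d\mu=Af(y,y')$. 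That $A$ is a retraction is the observation that if $f$ is already invariant then $f(gy,gy')=f(y,y')$ for every $g$, so $Af=f\cdot\mu(G)=f$.

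Next I would establish continuity of $A$ in the compact-open topology. Fix a compactum $K\subset Y\times Y$ and $\e>0$; the compact-open topology on $C(Y\times Y)$ is the topology of uniform convergence on compacta. For any $f$ one has the pointwise estimate $|Af(y,y')|\le\sup_{g\in G}|f(gy,gy')|$, and since $G\cdot K:=\{(gy,gy'):g\in G,\ (y,y')\in K\}$ is compact (being the image of the compact set $G\times K$ under a continuous map), uniform smallness of $f$ on $G\cdot K$ forces uniform smallness of $Af$ on $K$. Concretely, $\sup_K|Af|\le\sup_{G\cdot K}|f|$, which gives continuity of the linear map $A$ at the origin and hence everywhere.

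For the lattice-type and metric preservation properties I would argue as follows. Positivity (and preservation of constants, which follows from $A$ being a retraction fixing the constant $c$ since constants are invariant) is immediate from monotonicity of the integral. Preservation of pseudometrics is the crux: given a pseudometric $p$ on $X$, I would check symmetry of $Ap$ from symmetry of $p$, vanishing on the diagonal from $p(x,x)=0$, and the triangle inequality by integrating the pointwise triangle inequality $p(gy,gy'')\le p(gy,gy')+p(gy',gy'')$ over $g$ and using linearity of the integral. For metrics the only extra point is positive-definiteness: if $y\ne y'$ then $gy\ne gy'$ for every $g$ (the action is by bijections), so the integrand $p(gy,gy')$ is everywhere positive and continuous on the compact group $G$, whence its integral is strictly positive. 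Admissibility then follows by combining continuity of $Ap$ (already shown) with the dominating property, which transfers because $Ap(y,y')\ge$ a suitable positive quantity governed by $p$ along the orbit.

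I expect the main obstacle to be the admissibility (dominating) claim rather than the purely algebraic identities. The delicate point is to show that the formal identity $(Y,Ap)\to Y$ is continuous, i.e.\ that small $Ap$-distance forces topological closeness; the integral-average could in principle be small even when $p(y,y')$ is large if the orbit were badly behaved. The resolution is to exploit compactness of $G$: since $p$ is dominating, for a fixed basic neighborhood of $y$ one controls $p(y,y')$, and the invariance together with compactness of the orbit map lets one bound $p(y,y')$ below in terms of $Ap(y,y')$ uniformly. I would handle this by a compactness argument on $G$, essentially reducing it to the fact that a strictly positive continuous function on the compact group $G$ has positive integral, with the lower bound depending continuously on the data.
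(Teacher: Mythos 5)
Everything up to and including the preservation of metrics is essentially right: the well-definedness, invariance via left-invariance of $\mu$, the retraction property, the estimate $\sup_K|Af|\le\sup_{G\cdot K}|f|$ for compact-open continuity, and the integration of the triangle inequality are all sound, and your positivity argument for $Ap(y,y')$ (minimum of the strictly positive continuous function $g\mapsto p(gy,gy')$ on the compact $G$) is a legitimate variant of the paper's neighbourhood-of-the-identity argument. The gap is precisely in the step you yourself flag as the delicate one, the preservation of admissible (i.e., after continuity of $Ap$ is known, dominating) metrics, which you only sketch. The plan you describe --- bound $Ap(y,y')$ from below ``uniformly'' in terms of $p(y,y')$ by a compactness argument on $G$ --- cannot be carried out as stated. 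To prove continuity of the identity $(Y,Ap)\to Y$ at a point $y$ you need, for a given neighbourhood $V$ of $y$, a single $\delta>0$ with $Ap(y,y')\ge\delta$ for every $y'\notin V$. The natural lower bound is $Ap(y,y')\ge\tfrac12 p(y,y')\,\mu(U)$, where $U$ is a neighbourhood of the identity on which $p(gy,gy')\ge\tfrac12 p(y,y')$; but $U$ depends on $y'$, and since $Y\setminus V$ need not be compact there is no uniform control: the modulus of continuity of $g\mapsto gy'$ degenerates as $y'$ varies, so $\mu(U(y'))$ can tend to $0$. The observation that a strictly positive continuous function on $G$ has positive integral only re-proves positive-definiteness of $Ap$; it gives no lower bound that is uniform over $y'\notin V$, which is what domination requires.

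The paper's argument for this step is genuinely different and measure-theoretic, and you would need something of this kind. If $Ap(y_n,y)\to0$, then the functions $\varphi_n(g)=p(gy_n,gy)$ tend to $0$ in the $L_1(\mu)$-norm, hence some subsequence $\varphi_{k_n}$ tends to $0$ $\mu$-almost everywhere; choosing one $g_0\in G$ with $\varphi_{k_n}(g_0)\to0$ and using that $p$ is dominating gives $g_0y_{k_n}\to g_0y$ in $Y$, and applying the homeomorphism $y\mapsto g_0^{-1}y$ gives $y_{k_n}\to y$. Since every subsequence of $(y_n)$ admits such a further subsequence converging to $y$, the whole sequence converges, and sequential continuity suffices because $(Y,Ap)$ is a metric space. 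Without an argument along these lines (or some other device that genuinely handles the non-uniformity in $y'$), the admissibility claim remains unproved in your write-up.
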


\begin{proof} Let $d \in C(Y \times Y)$ be a metric on $Y$ and
$d' = Ad$. Let $a,b \in Y$, $a\neq b$. There is a neighborhood
$U$ of the neutral element of the group $G$ such that
$d(ga,gb) \ge 2^{-1} d(a,b)$ for $g \in U$. Therefore
$d'(a,b) \ge 2^{-1} \mu(U) d(a,b) > 0$, i.e., $d' = Ad$ is a
metric.

Now assume that $d \in C(Y \times Y)$ is an admissible metric, and
$(y_n) $ is a sequence of points of $Y$ such that
$\limn d'(y_n,y) = 0$ for some $y\in Y$.
Hence the sequence of real functions
$\varphi_n(g)= d(gy_n,gy)$ tends to zero in the $L_1$-norm, and
since $\mu(G) =1 < \infty$, there is a subsequence
$\varphi_{k_n}$ which tends to zero almost everywhere, in
particular, $\limn d(g_0y_{k_n},g_0y) = 0$ for some $g_0 \in G$.
``Multiplying the last relation from the left'' by $g_0^{-1}$ we
get $\limn d(y_{k_n},y) = 0$. The same arguments yield that
every subsequence of the sequence $(y_n)$ contains a subsequence
convergent (in the admissible metric $d$) to $y$. That means
that the whole sequence $(y_n)$ tends to $y$. We have proved
that the $d'= Ad$ is dominating, and (being continuous) is
admissible.
 The other assertions of the proposition are evident.
\end{proof}

\begin{proof}[Proof of Theorem~\ref{T3}] Let $T$ be the operator appearing in Theorem 1
(Theorem 2 in case of metrizable $Y$).
The required operator $I$ is the composition
$I = A \circ T|C_{inv}(X \times X)$.
\end{proof}

\section{The extension operators
$S,\,S_1,\, S_2 :\IR^{X\times X}\to \IR^{Y\times Y}$}

In this section we present a simple construction of extension operators
$S$, $S_1$, $S_2$ having almost all properties of the operator $T$.

Suppose $Y$ is a stratifiable space, $X$ is a closed subset of $Y$ and
$a,b$ are two distinct points in $X$. As we said, the space $Y$ admits a
continuous metric $d\le 1$ such that $d(y,X)>0$ for all $y\in Y\setminus X$. If
$Y$ is metrizable, we assume that $d$ is an admissible metric for $Y$.

For $y,y'\in Y$ let
$$
d^*(y,y')=\min[d(y,y'), d(y,X)+d(y',X)],
$$
Clearly, $d^*$ is a continuous pseudometric on $Y$ (moreover, the
restriction of $d^*$ on $Y\setminus X$ is a metric). Let $h:Y\to \HM(X)$ be the
map appearing in Proposition 2 and define
$$\begin{aligned}
S(p)(y,y')= hm(p)(h(y),h(y')) = \int_0^1p(h(y)(t),h(y')(t))dt,\\
S_1(p) = S(p) + p(a,b)d^*, \quad S_2(p) = S(p) +
(p(a,b)-p(a,a))d^*
\end{aligned}
$$
for $p \in \IR^{X\times X}$, $y,y'\in Y$
Thus we have defined three extension operators $S,S_1,S_2:\IR^{X\times X}\to\IR^{Y\times Y}$.

\begin{theorem}\label{T4} The operators $S, S_1$ and $S_2$ satisfy the
requirements of Theorem 1 except that $S$ does not preserve metrics, $S_1$
fails to preserve constants and $S_2$ is not positive. Moreover, if the
space $Y$ is metrizable, then the operators $S_1$ and $S_2$ preserve
dominating and admissible metrics.
\end{theorem}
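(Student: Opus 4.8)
The plan is to treat $S$ first, since $S_1$ and $S_2$ arise from it by adding the fixed correction term $p(a,b)\,d^*$ (resp. $(p(a,b)-p(a,a))\,d^*$). The operator $S$ is the composition $(h\times h)^*\circ hm$, so almost all of its properties descend from Proposition~\ref{P1} together with Proposition~\ref{P2}. Linearity and positivity are immediate; since $h$ extends the embedding $X\hookrightarrow\HM(X)$, for $x,x'\in X$ we have $h(x)=x$, $h(x')=x'$, whence $S(p)(x,x')=\int_0^1 p(x,x')\,dt=p(x,x')$, so $S$ is an extension operator. Preservation of constants, bounded functions and pseudometrics follows because $hm$ has these properties and pull-back along $h\times h$ preserves them. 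To see that $S$ preserves \emph{all} continuous functions (Proposition~\ref{P1} only supplies the bounded ones), I would repeat the truncation argument of Lemma~\ref{L3} verbatim: near a fixed $(y_0,y_0')$ the value $S(p)(y,y')$ depends only on $p$ restricted to $\supp(h(y))\times\supp(h(y'))\subset u(y)\times u(y')$, and upper semi-continuity of $u$ lets one replace $p$ by a bounded continuous function agreeing with it on a neighbourhood. The same localization, namely that $S(p)(y,y')$ depends only on the values of $p$ on the finite set $(\supp(h(y))\cup\supp(h(y')))^2$, yields continuity of $S$ in the point-wise, uniform and compact-open topologies exactly as in Lemma~\ref{L2}. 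Finally, $S$ does \emph{not} preserve metrics: whenever $h(y)=h(y')$ for some $y\ne y'$ (which is typical, as $h$ need not be injective on $Y\setminus X$) one gets $S(p)(y,y')=\int_0^1 p(h(y)(t),h(y)(t))\,dt=0$ for every metric $p$.

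Next I would pass to $S_1$ and $S_2$. Since $d^*$ is a fixed continuous pseudometric with $d^*\le d\le 1$ and $d^*|_{X\times X}\equiv 0$, both operators are linear extension operators preserving bounded and continuous functions, and they are continuous in the three topologies, because the evaluations $p\mapsto p(a,b)$ and $p\mapsto p(a,a)$ are continuous in all three topologies and $S$ already is. For positivity note that for $p\ge 0$ one has $p(a,b)\ge 0$, so $S_1(p)=S(p)+p(a,b)\,d^*\ge 0$; in contrast the coefficient $p(a,b)-p(a,a)$ of $S_2$ may be negative for a nonnegative $p$ (take $p(a,a)>p(a,b)$ and $p$ vanishing on the relevant supports), so $S_2$ is not positive. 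Dually, $S_2$ preserves constants, since for $p\equiv c$ the coefficient $p(a,b)-p(a,a)=0$, whereas $S_1(p)=c(1+d^*)$ is non-constant, so $S_1$ fails to preserve constants. The key elementary remark is that on a \emph{pseudometric} $p$ one has $p(a,a)=0$, hence $S_1(p)=S_2(p)=S(p)+p(a,b)\,d^*$; being a sum of pseudometrics with the nonnegative weight $p(a,b)$, this is again a pseudometric, so both $S_1$ and $S_2$ preserve pseudometrics.

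To verify that $S_1=S_2$ preserves metrics, I would check positivity off the diagonal case by case for a metric $p$ (so $p(a,b)>0$ and $S(p)\ge 0$): for $y,y'\in X$ the extension property gives $S_1(p)(y,y')=p(y,y')>0$; if exactly one of $y,y'$ lies outside $X$, or if both lie in $Y\setminus X$, then $d^*(y,y')>0$, because the restriction of $d^*$ to $Y\setminus X$ is a metric and $d^*(x,y')=d(y',X)>0$ when $x\in X$, $y'\notin X$; hence $S_1(p)(y,y')\ge p(a,b)\,d^*(y,y')>0$.

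The substantial part is the metrizable case, where $d$ is admissible and $u(y)\subset O_d(y,2d(y,X))$. Here I would show $S_1(p)=S_2(p)$ is dominating whenever $p$ is, mirroring Lemma~\ref{L5}; admissibility then follows since $S_1(p)$ is also continuous. Fixing $y\in Y$ and $\e\in(0,1]$, I must produce $\delta>0$ with $S_1(p)(y,y')\ge\delta$ whenever $d(y,y')\ge\e$. If $y\notin X$ this is immediate from the $d^*$ term: $d^*(y,y')=\min[d(y,y'),d(y,X)+d(y',X)]\ge\min[\e,d(y,X)]>0$, so $\delta=p(a,b)\min[\e,d(y,X)]$ works. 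The main obstacle is the case $y\in X$, where $d^*(y,y')=d(y',X)$ carries no information when $y'$ merely approaches $X$ far from $y$; this is exactly where the integral term $S(p)$ must do the work. I would split it as in Lemma~\ref{L5}: if $d(y',X)\ge\e/4$ the $d^*$ term again gives $S_1(p)(y,y')\ge p(a,b)\,\e/4$, while if $d(y',X)<\e/4$ then $\supp(h(y'))\subset O_d(y',\e/2)$, so $d(y,h(y')(t))>\e/2$ for all $t$, and the dominating property of $p$ furnishes $\eta>0$ with $p(y,h(y')(t))>\eta$, whence $S(p)(y,y')=\int_0^1 p(y,h(y')(t))\,dt\ge\eta$. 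Taking $\delta$ to be the minimum of the three bounds completes the argument, and the identical estimates apply to $S_2$ because $S_1$ and $S_2$ coincide on metrics.
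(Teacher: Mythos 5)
Your proposal is correct and follows essentially the same route as the paper: the first part is derived from Propositions~\ref{P1} and \ref{P2} together with the arguments of Lemmas~\ref{L2} and \ref{L3}, exactly as the paper does. The only difference is organizational: where the paper factors the dominating-metric claim through the sequential property $(*)$ of $S(p)$ (Lemmas~\ref{L9a} and \ref{L9b}), you run the equivalent direct $\e$--$\delta$ estimate modelled on Lemma~\ref{L5}, using the same two key facts (the $d^*$ term when $d(y',X)$ is not small, and $\supp(h(y'))\subset O_d(y',2d(y',X))$ plus the dominating property of $p$ when it is).
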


\begin{proof} The first statement of the theorem easily follows from
Propositions~\ref{P1} and \ref{P2} (to prove that these operators preserve continuous
functions one should apply the arguments from Lemma~\ref{L3}).
The fact that in the metric case, $S_1$ and $S_2$
preserve dominating metrics is an immediate consequence of the
next two easy lemmas.

\begin{lemma}\label{L9a} For every dominating metric $p$ on $X$ the pseudometric
$\rho=S(p)$ has the following property :
\item{($*$)} Let $y_n \in Y$ for $n \in \IN$ and  $x\in X$.
Then $\limn \rho(y_n,x) =0$ and $\limn d(y_n,X) = 0$ imply
$\limn d(y_n,x) = 0$.
\end{lemma}

\begin{proof} (cf. proof of Lemma~\ref{L5}). Recall that $d$ is a fixed
admissible metric for $Y$. According to the last assertion of
Proposition~\ref{P2} and the definition of the operator S,
for every $y \in Y$ there is an $y' \in u(y) \subset X$ such that
$$
d(y,y') \le 2d(y,x) \text{ \ \ and \ \ } p(y',x) \le \rho(y,x).
$$
We have $d(y_n,x) \le d(y_n,y'_n) +d(y'_n,x)
\le 2d(y_n,X) + d(y'_n,x)$.

But $0 \le p(y'_n,x) \le \rho(y_n,x)  \to  0$ as
$n\to\infty$,  and since $p$ is a dominating metric for $X$, we
get $\limn d(y'_n,x) = 0$, and by the assumption of ($*$),
$\limn d(y_n,x)=0$.
\end{proof}

\begin{lemma}\label{L9b} For every pseudometric $\rho$ in $Y$
and every constant $c>0$
the sum $\rho+cd^*$ is a dominating metric on $Y$, provided $\rho$ has the
property $(*)$.
\end{lemma}

\begin{proof} By ($*$), the sum $\rho+cd^*$ is dominating ``at
each point''$x \in X$. In order to show the domination at the
remaining points it is enough to examine the second term
$d^*$ which is a metric, when restricted to $Y\setminus X$. 
\end{proof}

\end{proof}

Finally, we pose an open problem suggested by Theorem~\ref{T2} and a known result
of J.S.~Isbell \cite{Is} according to which for every subspace $X$ of a uniform
space $Y$, every bounded uniformly continuous pseudometric on $X$ extends
to a bounded uniformly continuous pseudometric on $Y$.

\begin{problem} Suppose $X$ is a subspace of a metrizable uniform space
$Y$. Does there exist a ``nice'' operator extending bounded uniformly
continuous pseudometrics from $X$ over $Y$.
\end{problem}


\end{document}